\newtheorem{theorem}{Theorem}
\newtheorem{lemma}[theorem]{Lemma}
\newtheorem{corollary}[theorem]{Corollary}
\newtheorem{proposition}[theorem]{Proposition}
\theoremstyle{definition}
\newtheorem{definition}[theorem]{Definition}
\newtheorem{remark}[theorem]{Remark}
\numberwithin{equation}{section}
\numberwithin{theorem}{section}
\newenvironment{OMabstract}{\noindent\textbf{Abstract.} }{\medskip}
\newenvironment{OMsubjclass}{\noindent\textbf{Mathematics Subject Classification (2020):} }{\medskip}
\newenvironment{OMkeywords}{\noindent\textbf{Keywords:}  }{\medskip}
\begin{document}

\author{Pablo Rocha} 
\title{Calder\'on-Hardy type spaces and the Heisenberg sub-Laplacian}
\maketitle


\begin{OMabstract}
For $0 < p \leq 1 < q < \infty$ and $\gamma > 0$, we introduce the Calder\'on-Hardy spaces $\mathcal{H}^{p}_{q, \gamma}(\mathbb{H}^{n})$ on the Heisenberg group $\mathbb{H}^{n}$, and show for every $f \in H^{p}(\mathbb{H}^{n})$ that the equation
\[
\mathcal{L} F = f
\]
has a unique solution $F$ in $\mathcal{H}^{p}_{q, 2}(\mathbb{H}^{n})$, where $\mathcal{L}$ is the sub-Laplacian on $\mathbb{H}^{n}$, 
$1 < q < \frac{n+1}{n}$ and $(2n+2) \, (2 + \frac{2n+2}{q})^{-1} < p \leq 1$.
\end{OMabstract}

\begin{OMkeywords}
Calder\'on-Hardy type spaces, Hardy type spaces, atomic decomposition, Heisenberg group, sub-Laplacian.
\end{OMkeywords}

\begin{OMsubjclass}
42B25, 42B30, 42B35, 43A80.
\end{OMsubjclass}

\section{Introduction}

The Laplace operator or Laplacian $\Delta$ on $\mathbb{R}^n$ is defined by
\[
\Delta = \sum_{j=1}^{n} \frac{\partial^2}{\partial x_j^2}.
\]
The ubiquity and the importance of this operator in physics and mathematics is well known. Needless to say that the study of problems involving the Laplacian are of interest either because of their applications or in their own right.

Given $m \in \mathbb{N}$, consider the inhomogeneous equation
\begin{equation} \label{delta problem}
\Delta^m F = f,
\end{equation}
where $\Delta^m$ is the iterated Laplacian, $f$ is a given data function and $F$ is an unknown function. Then, the problem consists in 
finding a function $F$ that solves (\ref{delta problem}) in some sense. It is common to address this problem by means of the fundamental solution of the operator $\Delta^m$. A \textit{fundamental solution} for $\Delta^m$ is a distribution $K$ on $\mathbb{R}^n$ such that 
$\Delta^m K = \delta$ in the distributional sense, where $\delta$ is Dirac's delta at the origin. In this case, for every $m \in \mathbb{N}$ fixed, we have that 
\[
\Phi_m(x) = \left\{ \begin{array}{cc}
                 C_1 \, |x|^{2m-n} \log{|x|}, & \text{if} \,\, n \,\, \text{is even and} \,\, 2m-n \geq 0  \\
                 C_2 \, |x|^{2m-n}, & \text{otherwise}
               \end{array} \right.
\]
is a fundamental solution for $\Delta^m$ on $\mathbb{R}^n$ (see p. 201-202 in \cite{Gelfand}). That fundamental solution is not uniquely determined. Indeed, $\Phi_m + u$ with $\Delta^m u = 0$, it is other fundamental solution for $\Delta^m$. These fundamental solutions are useful for producing solutions of the equation (\ref{delta problem}). For instance, if $m \geq 1$ and $f$ is a $C^{\infty}$-function with compact support, then $F= \Phi_m \ast f$ solves (\ref{delta problem}) in the classical sense. This formula also works for $m = 1$ when one assumes $f \in L^1(\mathbb{R}^n)$, and that $\int |f(x)| \log(|x|) dx < \infty$ in the case $n=2$, (see \cite[Theorem 2.21]{Gerald}). For $m \geq 1$ and $f \in L^p(\mathbb{R}^n)$ with $1 < p < \infty$, A. P. Calder\'on proved that there exists a locally integrable function $F$ what solves (\ref{delta problem}) in the distributional sense and $\| \partial^{\alpha} F \|_p \leq C \|f\|_p$ for all multi-index $\alpha$ such that $|\alpha| = 2m$, with $C$ independent of $f$ (see \cite[Lemma 8]{calderon}). 

It is known that the Hardy spaces $H^p(\mathbb{R}^n)$ are good substitutes for Lebesgue spaces 
$L^p(\mathbb{R}^n)$ when $0 < p \leq 1$ (see \cite{fefferman}, \cite{Elias}). In this direction, A. Gatto, J. Jim\'enez and C. Segovia in \cite{segovia}, posed the problem (\ref{delta problem}) for $m \geq 1$ and $f \in H^p(\mathbb{R}^n)$, $0 < p \leq 1$. To solve it they introduce the Calder\'on-Hardy spaces $\mathcal{H}^p_{q, \gamma}(\mathbb{R}^n)$, $0 < p \leq 1 < q < \infty$ and $\gamma > 0$, and proved for 
$n(2m + n/q)^{-1} < p \leq 1$ that given $f \in H^p(\mathbb{R}^n)$ there exists a unique $F \in \mathcal{H}^p_{q, 2m}(\mathbb{R}^n)$ that solves (\ref{delta problem}).

The underlying idea in \cite{segovia} to address this problem is the following: given $f \in H^p(\mathbb{R}^n)$, there exists an atomic decomposition $f = \sum k_j a_j$, such that $\|f\|_{H^p(\mathbb{R}^n)}^p \sim \sum k_j^p$ (see \cite{Latter}), then once defined the space 
$\mathcal{H}^p_{q, 2m}(\mathbb{R}^n)$ (which is defined as a quotient space) together with its "norm" 
$\| \cdot \|_{\mathcal{H}^p_{q, 2m}(\mathbb{R}^n)}$ , they define $b_j = (a_j \ast \Phi_m)$ and consider the class 
$B_j \in \mathcal{H}^p_{q, 2m}(\mathbb{R}^n)$ such that $b_j \in B_j$. Finally, for $n(2m+n/q)^{-1} < p \leq 1$, they prove that the series 
$\sum k_j B_j$ converges to $F$ in $\mathcal{H}^p_{q, 2m}(\mathbb{R}^n)$ and $\Delta^m F = f$. Moreover, $\Delta^m$ is a bijective mapping from $\mathcal{H}^p_{q, 2m}(\mathbb{R}^n)$ onto $H^p(\mathbb{R}^n)$, with 
$\| F \|_{\mathcal{H}^p_{q, 2m}(\mathbb{R}^n)} \sim \|\Delta^m F \|_{H^p(\mathbb{R}^n)}$.

In \cite{Duran}, R. Dur\'an extended the definition and atomic decomposition of $\mathcal{H}^p_{q, 2m}$ to the case of 
non-isotropic dilations on $\mathbb{R}^n$, solving an analogue problem to (\ref{delta problem}) for more general elliptic operators with symbols of the form $\xi_1^{2k_1} + \cdot \cdot \cdot + \xi_n^{2k_n}$, with $k_1, ..., k_n \in \mathbb{N}$.

The equation (\ref{delta problem}), for $f \in H^{p(\cdot)}(\mathbb{R}^{n})$ and $f \in H^{p}(\mathbb{R}^{n}, w)$, was studied by the present author in \cite{rocha1} and \cite{rocha2} respectively, obtaining analogous results to those of Gatto, Jim\'enez and Segovia.

Recently, Z. Liu, Z. He and H. Mo in \cite{He} extended the definition of Calder\'on-Hardy spaces to Orlicz setting. These new Orlicz 
Calder\'on-Hardy spaces can cover classical Calder\'on-Hardy spaces in \cite{segovia}. As an application, they solved the equation 
(\ref{delta problem}) when $f \in H^{\Phi}(\mathbb{R}^{n})$, where $H^{\Phi}(\mathbb{R}^{n})$ are the Orlicz-Hardy spaces defined 
in \cite{Nakai}.

On the other hand, it is well known that the Lie group "most commutative" among the non-commutative is the Heisenberg group, it plays an important role in several branches of mathematics (see \cite{Thangavelu}). So, one has the opportunity to ask whether certain standard 
results of Euclidean harmonic analysis can be adapted to the non-commutative setting of the Heisenberg group. Following this line, the 
purpose of this work is to pose and solve an analogous problem to (\ref{delta problem}) on the Heisenberg group with $m=1$. More precisely, for $f \in H^p(\mathbb{H}^n)$, $0 < p \leq 1$, we consider the equation
\begin{equation} \label{L problem}
\mathcal{L} F = f,
\end{equation}
where $\mathcal{L}$ is the sub-Laplacian on $\mathbb{H}^{n}$. The solution obtained in \cite{segovia}, for the Euclidean case, 
suggests us that once defined the space $\mathcal{H}^{p}_{q, 2}(\mathbb{H}^{n})$ a representative for the solution 
$F \in \mathcal{H}^{p}_{q, 2}(\mathbb{H}^{n})$ of (\ref{L problem}) should be $\sum k_j (a_j \ast_{\mathbb{H}^n} \Phi)$, where
$\sum k_j a_j$ is an atomic decomposition for $f \in H^p(\mathbb{H}^{n})$ (see \cite{Foll-St}), and $\Phi$ is the fundamental 
solution of $\mathcal{L}$ obtained by G. Folland in \cite{Folland}. We shall see that this argument works as well on $\mathbb{H}^n$, 
but taking into account certain non-trivial aspects inherent to the Heisenberg group.

Our main results are contained in Theorems \ref{principal thm} and \ref{2nd thm} (see Section 5 below). The first of them 
states that \textit{if $Q=2n+2$, $1 < q < \frac{n+1}{n}$ and $Q \, (2 + \frac{Q}{q})^{-1} < p \leq 1$, then the sub-Laplacian $\mathcal{L}$ 
on $\mathbb{H}^n$ is a bijective mapping from $\mathcal{H}^{p}_{q, 2}(\mathbb{H}^{n})$ onto $H^{p}(\mathbb{H}^{n})$. Moreover, for every 
$G \in \mathcal{H}^{p}_{q, 2}(\mathbb{H}^{n})$, the quantities $\| \mathcal{L}G \|_{H^{p}(\mathbb{H}^{n})}$ and
$\|G \|_{\mathcal{H}^{p}_{q, 2}(\mathbb{H}^{n})}$ are comparable with implicit constants independent of $G$.} In other words, for 
$Q \, (2 + \frac{Q}{q})^{-1} < p \leq 1$ and $f \in H^p(\mathbb{H}^n)$, the equation (\ref{L problem}) has a unique solution in 
$\mathcal{H}^{p}_{q, 2}(\mathbb{H}^{n})$.

A key technical result needed to get Theorem \ref{principal thm} is Proposition \ref{pointwise estimate} below. This establishes a pointwise inequality in $\mathbb{H}^n$ which can be inferred from Gatto, Jim\'enez and Segovia's approach, however its analogous in $\mathbb{R}^n$ is not explicitly stated in \cite{segovia}.

Although the fundamental solutions for the powers of the sub-Laplacian $\mathcal{L}^m$ are known for every integer $m \geq 2$ 
(see \cite{Benson}), the problem in this case is much more complicated. For this reason we focus solely on the case $m=1$.

Finally, our second result says that the case $0 < p \leq Q \, (2 + \frac{Q}{q})^{-1}$ is trivial. Indeed, we have that \textit{if 
$1 < q < \frac{n+1}{n}$ and $0 < p \leq Q \, (2 + \frac{Q}{q})^{-1}$, then $\mathcal{H}^{p}_{q, \, 2}(\mathbb{H}^{n}) = \{ 0 \}$.}

\

This paper is organized as follows. In Section 2 we state the basics of the Heisenberg group. The definition and atomic decomposition of Hardy spaces on the Heisenberg group are presented in Section 3. We introduce the Calder\'on-Hardy spaces on the Heisenberg group and investigate their properties in Section 4. The key technical result mentioned above is also stated in Section 4. Finally, our main results are proved in Section 5.

\

\textbf{Notation:} The symbol $A \lesssim B$ stands for the inequality $A \leq c B$ for some constant $c$. We denote by 
$B(z_0, \delta)$ the $\rho$ - ball centered at $z_0 \in \mathbb{H}^{n}$ with radius $\delta$. Given $\beta > 0$ and a $\rho$ - ball 
$B= B(z_0, \delta)$, we set $\beta B= B(z_0, \beta \delta)$. For a measurable subset $E\subseteq \mathbb{H}^{n}$ we denote by $\left\vert E\right\vert $ and $\chi_{E}$ the Haar measure of $E$ and the characteristic function of $E$ respectively. Given a real number $s \geq 0$, we write $\lfloor s \rfloor$ for the integer part of $s$.

Throughout this paper, $C$ will denote a positive constant, not necessarily the same at each occurrence.

\section{Preliminaries}

The  Heisenberg group $\mathbb{H}^{n}$ can be identified with $\mathbb{R}^{2n} \times \mathbb{R}$ whose group law 
(noncommutative) is given by
\[
(x,t) \cdot (y,s) = \left( x+y, t+s + x^{t} J y \right),
\]
where $J$ is the $2n \times 2n$ skew-symmetric matrix given by
\[
J= 2 \left( \begin{array}{cc}
                           0 & -I_n \\
                           I_n & 0 \\
                                      \end{array} \right)
\]
being $I_n$ the $n \times n$ identity matrix.

The dilation group on $\mathbb{H}^n$ is defined by 
\[
r \cdot (x,t) = (rx, r^{2}t), \,\,\,\ r > 0.
\]
With this structure we have that $e = (0,0)$ is the neutral element, $(x, t)^{-1}=(-x, -t)$ is the inverse of $(x, t)$, and
$r \cdot((x,t) \cdot (y,s)) = (r\cdot(x,t)) \cdot (r\cdot(y,s))$. 

The \textit{Koranyi norm} on $\mathbb{H}^{n}$ is the function $\rho : \mathbb{H}^{n} \to [0, \infty)$ defined by
\begin{equation} \label{Koranyi norm}
\rho(x,t) = \left( |x|^{4} + \, t^{2}  \right)^{1/4}, \,\,\, (x,t) \in \mathbb{H}^{n},
\end{equation}
where $| \cdot |$ is the usual Euclidean norm on $\mathbb{R}^{2n}$. It is easy to check that $|x| \leq \rho(x,t)$ and $|t| \leq \rho(x,t)^2$.

Let $z = (x,t)$ and $w = (y,s) \in \mathbb{H}^{n}$, the Koranyi norm satisfies the following properties:
\begin{equation*}
\begin{split}
\rho(z) & = 0 \,\,\, \text{if and only if} \,\, z = e, \\
\rho(z^{-1}) & = \rho(z) \,\,\,\, \text{for all} \,\, z \in \mathbb{H}^{n}, \\
\rho(r \cdot z) & = r \rho(z) \,\,\,\, \text{for all} \,\, z \in \mathbb{H}^{n} \,\, \text{and all} \,\, r > 0, \\
\rho(z \cdot w) & \leq  \rho(z) + \rho(w) \,\,\,\, \text{for all} \,\, z, w \in \mathbb{H}^{n}, \\
| \rho(z) - \rho(w) | & \leq \rho(z \cdot w) \,\,\,\, \text{for all} \,\, z, w \in \mathbb{H}^{n}.
\end{split}
\end{equation*}
Moreover, $\rho$ is continuous on $\mathbb{H}^{n}$ and is smooth on $\mathbb{H}^{n} \setminus \{ e \}$. The $\rho$ - ball centered at 
$z_0 \in \mathbb{H}^{n}$ with radius $\delta > 0$ is defined by
\[
B(z_0, \delta) := \{ w \in \mathbb{H}^{n} : \rho(z_0^{-1} \cdot w) < \delta \}.
\]

The topology in $\mathbb{H}^{n}$ induced by the $\rho$ - balls coincides with the Euclidean topology of 
$\mathbb{R}^{2n} \times \mathbb{R} \equiv\mathbb{R}^{2n+1}$ (see \cite[Proposition 3.1.37]{Fischer}). So, the borelian sets of 
$\mathbb{H}^{n}$ are identified with those of $\mathbb{R}^{2n+1}$. The Haar measure in $\mathbb{H}^{n}$ is the Lebesgue measure of 
$\mathbb{R}^{2n+1}$, thus $L^{p}(\mathbb{H}^{n}) \equiv L^{p}(\mathbb{R}^{2n+1})$, for every $0 < p \leq \infty$. Moreover, for 
$f \in L^{1}(\mathbb{H}^{n})$ and for $r > 0$ fixed, we have
\begin{equation} \label{homog dim}
\int_{\mathbb{H}^{n}} f(r \cdot z) \, dz = r^{-Q} \int_{\mathbb{H}^{n}} f(z) \, dz,
\end{equation}
where $Q= 2n+2$. The number $2n+2$ is known as the \textit{homogeneous dimension} of $\mathbb{H}^{n}$ (we observe that the \textit{topological dimension} of $\mathbb{H}^{n}$ is $2n+1$).

Let $|B(z_0, \delta)|$ be the Haar measure of the $\rho$ - ball $B(z_0, \delta) \subset \mathbb{H}^{n}$. Then, 
\[
|B(z_0, \delta)| = c \delta^{Q},
\]
where $c = |B(e,1)|$ and  $Q = 2n+2$. Given $\lambda > 0$, we put $\lambda B = \lambda B(z_0, \delta) = 
B(z_0, \lambda \delta)$. So $|\lambda B| = \lambda^{Q}	|B|$.

\begin{remark}
For any $z, z_0 \in \mathbb{H}^{n}$ and $\delta >0$, we have
\[
z_0 \cdot B(z, \delta) = B(z_0 \cdot z, \delta).
\]
In particular, $B(z, \delta) = z \cdot B(e, \delta)$. It is also easy to check that $B(e, \delta) = \delta \cdot B(e,1)$ for any $\delta > 0$.
\end{remark}
\begin{remark} \label{cambio de centro}
If $f \in L^{1}(\mathbb{H}^{n})$, then for every $\rho$ - ball $B$ and every $z_0 \in \mathbb{H}^{n}$, we have
\[
\int_{B} f(w) \, dw = \int_{z_{0}^{-1} \cdot B} f(z_0 \cdot u) \, du.
\]
\end{remark}

The Hardy-Littlewood maximal operator $M$ is defined by
\[
Mf(z) = \sup_{B \ni z} |B|^{-1}\int_{B} |f(w)| \, dw,
\]
where $f$ is a locally integrable function on $\mathbb{H}^{n}$ and the supremum is taken over all the $\rho$ - balls $B$ containing $z$.

\

If $f$ and $g$ are measurable functions on $\mathbb{H}^{n}$, their convolution $f * g$ is defined by
\[
(f * g)(z) := \int_{\mathbb{H}^{n}} f(w) g(w^{-1} \cdot z) \, dw,
\]
when the integral is finite.

For every $i = 1,2, ..., 2n+1$, $X_i$ denotes the left invariant vector field given by
\[
X_i = \frac{\partial}{\partial x_i} + 2 x_{i+n} \frac{\partial}{\partial t}, \,\,\,\, i=1, 2, ..., n;
\]
\[
X_{i+n} = \frac{\partial}{\partial x_{i+n}} - 2 x_{i} \frac{\partial}{\partial t}, \,\,\, i=1, 2, ..., n;
\]
and
\[
X_{2n+1} = \frac{\partial}{\partial t}.
\]
Similarly, we define the right invariant vector fields $\{ \widetilde{X}_i \}_{i=1}^{2n+1}$ by
\[
\widetilde{X}_i = \frac{\partial}{\partial x_i} - 2 x_{i+n} \frac{\partial}{\partial t}, \,\,\,\, i=1, 2, ..., n;
\]
\[
\widetilde{X}_{i+n} = \frac{\partial}{\partial x_{i+n}} + 2 x_{i} \frac{\partial}{\partial t}, \,\,\, i=1, 2, ..., n;
\]
and
\[
\widetilde{X}_{2n+1} = \frac{\partial}{\partial t}.
\]
The sub-Laplacian on $\mathbb{H}^n$, denoted by $\mathcal{L}$, is the counterpart of the Laplacain $\Delta$ on $\mathbb{R}^n$. The 
sub-Laplacian $\mathcal{L}$ is defined by
\[
\mathcal{L} =- \sum_{i=1}^{2n} X_i^2,
\]
where $X_i$, $i= 1, ..., 2n$, are the left invariant vector fields defined above.

Given a multi-index $I=(i_1,i_2, ..., i_{2n}, i_{2n+1}) \in (\mathbb{N} \cup \{ 0 \})^{2n+1}$, we set
\[
|I| = i_1 + i_2 + \cdot \cdot \cdot + i_{2n} + i_{2n+1}, \hspace{.5cm} d(I) = i_1 + i_2 + \cdot \cdot \cdot + i_{2n} + 2 \, i_{2n+1}.
\]
The amount $|I|$ is called the length of $I$ and $d(I)$ the homogeneous degree of $I$. We adopt the following multi-index notation for
higher order derivatives and for monomials on $\mathbb{H}^{n}$. If $I=(i_1, i_2, ..., i_{2n+1})$ is a multi-index, 
$X = \{ X_i \}_{i=1}^{2n+1}$, $\widetilde{X} =  \{ \widetilde{X}_{i} \}_{i=1}^{2n+1}$, and 
$z = (x,t) = (x_1, ..., x_{2n}, t) \in \mathbb{H}^{n}$, we put
\[
X^{I} := X_{1}^{i_1} X_{2}^{i_2} \cdot \cdot \cdot X_{2n+1}^{i_{2n+1}}, \,\,\,\,\,\, 
\widetilde{X}^{I} := \widetilde{X}_{1}^{i_1} \widetilde{X}_{2}^{i_2} \cdot \cdot \cdot \widetilde{X}_{2n+1}^{i_{2n+1}},
\]
and
\[
z^{I} := x_{1}^{i_1} \cdot \cdot \cdot x_{2n}^{i_{2n}} \cdot t^{i_{2n+1}}.
\]
A computation give
\[
X^{I}(f(r \cdot z)) = r^{d(I)} (X^{I}f)(r\cdot z), \,\,\,\,\,\, \widetilde{X}^{I}(f(r \cdot z)) = r^{d(I)} (\widetilde{X}^{I}f)(r\cdot z)
\]
and 
\[
(r\cdot z)^{I} = r^{d(I)} z^{I}.
\]
So, the operators $X^{I}$ and $\widetilde{X}^{I}$ and the monomials $z^{I}$ are homogeneous of degree $d(I)$. In particular, the 
sub-Laplacian $\mathcal{L}$ is an operator homogeneous of degree $2$. The operators $X^{I}$, $\widetilde{X}^{I}$, and $\mathcal{L}$  
interact with the convolutions in the following way
\[
X^{I}(f \ast g) = f \ast (X^{I}g), \,\,\,\,\,\, \widetilde{X}^{I}(f \ast g) = (\widetilde{X}^{I} f) \ast g, \,\,\,\,\,\,
(X^{I} f) \ast g = f \ast (\widetilde{X}^{I} g),
\]
and 
\[
\mathcal{L} (f \ast g) = f \ast \mathcal{L}g.
\]

Every polynomial $p$ on $\mathbb{H}^n$ can be written as a unique finite linear combination of the monomials $z^I$, that is
\begin{equation} \label{polynomial}
p(z) = \sum_{I \in \mathbb{N}_{0}^n} c_I z^I,
\end{equation}
where all but finitely many of the coefficients $c_I \in \mathbb{C}$ vanish. The \textit{homogeneous degree} of a polynomial $p$ written as 
(\ref{polynomial}) is $\max \{ d(I) : I \in \mathbb{N}_{0}^n \,\, \text{with} \,\, c_I \neq 0 \}$. Let $k \in \mathbb{N} \cup \{ 0 \}$, 
with $\mathcal{P}_{k}$ we denote the subspace formed by all the polynomials of homogeneous degree at most $k$. So, every 
$p \in \mathcal{P}_{k}$ can be written as $p(z) = \sum_{d(I) \leq k} c_I \, z^I$, with $c_I \in \mathbb{C}$.

\

The Schwartz space $\mathcal{S}(\mathbb{H}^{n})$ is defined as the collection of all the $\phi \in C^{\infty}(\mathbb{H}^{n})$ such that
\[
\sup_{z \in \mathbb{H}^{n}} (1+\rho(z))^{N} |(X^{I} \phi)(z)| < \infty,
\]
for all $N \in \mathbb{N}_{0}$ and all $I \in (\mathbb{N}_{0})^{2n+1}$. We topologize the space $\mathcal{S}(\mathbb{H}^{n})$ with the following family of semi-norms
\[
\| \phi \|_{\mathcal{S}(\mathbb{H}^{n}), \, N} = \sum_{d(I) \leq N} \sup_{z \in \mathbb{H}^{n}} (1+\rho(z))^{N} |(X^{I} \phi)(z)| \,\,\,\,\,\,\, (N \in \mathbb{N}_{0}),
\]
with $\mathcal{S}'(\mathbb{H}^{n})$ we denote the dual space of $\mathcal{S}(\mathbb{H}^{n})$.

\bigskip

A fundamental solution for the sub-Laplacian on $\mathbb{H}^n$ was obtained by G. Folland in \cite{Folland}. More precisely, he proved the following result.

\begin{theorem} \label{fund solution}
$c_n \, \rho^{-2n}$ is a fundamental solution for $\mathcal{L}$ with source at $0$, where
\[
\rho(x,t) = (|x|^4 + t^2)^{1/4},
\]
and
\[
c_n = \left[ n(n+2) \int_{\mathbb{H}^n} |x|^2 (\rho(x,t)^4 + 1)^{-(n+4)/2} dxdt \right]^{-1}.
\]
In others words, for any $u \in \mathcal{S}(\mathbb{H}^{n})$, $\left( \mathcal{L}u,  c_n \rho^{-2n} \right) = u(0)$.
\end{theorem}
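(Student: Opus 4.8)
The plan is to reduce the statement to the distributional identity $\mathcal{L}\Phi = \delta_{0}$ in $\mathcal{S}'(\mathbb{H}^{n})$, where $\Phi := c_{n}\rho^{-2n}$, and then prove the latter by regularizing the singularity. For the reduction: each $X_{i}$ is skew-adjoint with respect to the Haar measure (indeed $\int (X_{i}f)g = -\int f(X_{i}g)$ since the coefficient of $\partial_{t}$ in $X_{i}$ does not depend on $t$), hence $\mathcal{L}$ is formally self-adjoint and $(\mathcal{L}u,\Phi) = \langle\mathcal{L}\Phi,u\rangle$ for every $u\in\mathcal{S}(\mathbb{H}^{n})$; moreover $\rho^{-2n}\in L^{1}_{\mathrm{loc}}(\mathbb{H}^{n})$ (because $2n<Q=2n+2$) and is tempered, so $\Phi$ is a well-defined element of $\mathcal{S}'(\mathbb{H}^{n})$. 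Now, for $\varepsilon>0$ set $\Phi_{\varepsilon}:=c_{n}(\rho^{4}+\varepsilon^{4})^{-n/2}=c_{n}(|x|^{4}+t^{2}+\varepsilon^{4})^{-n/2}$, which is smooth on all of $\mathbb{H}^{n}$. Since $\rho^{4}=|x|^{4}+t^{2}$ depends on $x$ only through $r=|x|$, and since on smooth functions of $(r,t)$ the sub-Laplacian reduces to
\[
\mathcal{L} = -\Bigl(\partial_{r}^{2}+\tfrac{2n-1}{r}\,\partial_{r}+4r^{2}\,\partial_{t}^{2}\Bigr)
\]
(the first-order ``angular'' part $\sum_{i=1}^{n}(x_{i+n}\partial_{x_{i}}-x_{i}\partial_{x_{i+n}})\partial_{t}$ of $\sum_{i}X_{i}^{2}$ annihilating such functions), a direct differentiation of $\Phi_{\varepsilon}$ — in which, writing $|x|^{4}+t^{2}=(\rho^{4}+\varepsilon^{4})-\varepsilon^{4}$, every term not carrying the factor $\varepsilon^{4}$ cancels — yields the pointwise identity
\[
\mathcal{L}\Phi_{\varepsilon}(z)=4n(n+2)\,c_{n}\,\varepsilon^{4}\,|x|^{2}\,\bigl(\rho(z)^{4}+\varepsilon^{4}\bigr)^{-(n+4)/2},\qquad z=(x,t).
\]
Being an equality of smooth tempered functions, it also holds in $\mathcal{S}'(\mathbb{H}^{n})$; letting $\varepsilon\to 0$ away from the origin recovers in particular $\mathcal{L}(\rho^{-2n})\equiv 0$ on $\mathbb{H}^{n}\setminus\{0\}$.

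The next step is to recognize the right-hand side as an approximate identity. Put $\Psi(z):=|x|^{2}(\rho(z)^{4}+1)^{-(n+4)/2}$; then $\Psi\in L^{1}(\mathbb{H}^{n})$, being bounded near $0$ and $O\!\bigl(\rho^{-(2n+6)}\bigr)$ at infinity with $2n+6>Q$. Using the dilations $\delta_{r}(x,t)=(rx,r^{2}t)$ one checks directly that $\varepsilon^{4}|x|^{2}(\rho^{4}+\varepsilon^{4})^{-(n+4)/2}=\varepsilon^{-Q}\Psi(\delta_{1/\varepsilon}z)$, so the right-hand side of the last display is $4n(n+2)c_{n}$ times the standard dilation-type approximate identity associated with $\Psi$; hence $\mathcal{L}\Phi_{\varepsilon}\to 4n(n+2)\,c_{n}\bigl(\int_{\mathbb{H}^{n}}\Psi\bigr)\delta_{0}$ in $\mathcal{S}'(\mathbb{H}^{n})$ as $\varepsilon\to 0$. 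On the other hand $0\le\Phi_{\varepsilon}\le c_{n}\rho^{-2n}$, so dominated convergence gives $\Phi_{\varepsilon}\to\Phi$ in $\mathcal{S}'(\mathbb{H}^{n})$, and, $\mathcal{L}$ being continuous on $\mathcal{S}'(\mathbb{H}^{n})$, passing to the limit in the displayed identity gives $\mathcal{L}\Phi = \bigl(4n(n+2)\,c_{n}\int_{\mathbb{H}^{n}}|x|^{2}(\rho^{4}+1)^{-(n+4)/2}\,dz\bigr)\delta_{0}$. Thus $\mathcal{L}\Phi=\delta_{0}$ exactly when $c_{n}$ is the reciprocal of $4n(n+2)\int_{\mathbb{H}^{n}}|x|^{2}(\rho^{4}+1)^{-(n+4)/2}\,dz$, and this integral is computed in closed form by integrating out $t$ first and then the radial variable in $\mathbb{R}^{2n}$, each step being an elementary Beta integral.

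The computational heart is the displayed identity for $\mathcal{L}\Phi_{\varepsilon}$: the differentiation is routine, but the cancellation of all terms except the $\varepsilon^{4}$ one must be tracked with care — this is exactly where the homogeneity exponent $-2n=2-Q$ of $\rho$ and the constant $4n(n+2)$ emerge. The two passages to the limit are standard facts: the first is that a dilation-type $L^{1}$ approximate identity converges against Schwartz functions to (its mass)$\,\cdot\,\delta_{0}$ (split the defining integral into a fixed ball about $0$ and its complement, and use dominated convergence together with the $L^{1}$-tail bound and the boundedness of the test function), and the second is dominated convergence, using that $\rho^{-2n}\phi\in L^{1}(\mathbb{H}^{n})$ for every $\phi\in\mathcal{S}(\mathbb{H}^{n})$. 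I would also note, as used above, that the classical identity for $\mathcal{L}\Phi_{\varepsilon}$ is automatically a distributional one because $\Phi_{\varepsilon}$ is smooth on all of $\mathbb{H}^{n}$, so no boundary contributions arise in this approach.
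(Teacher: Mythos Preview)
The paper does not prove this theorem itself; it is quoted from Folland's 1973 note (reference \cite{Folland}), so there is no in-paper argument to compare against. Your sketch is precisely Folland's original proof: regularize by $\Phi_{\varepsilon}=c_{n}(\rho^{4}+\varepsilon^{4})^{-n/2}$, compute $\mathcal{L}\Phi_{\varepsilon}$ explicitly using the radial form of $\mathcal{L}$, recognize the result as a dilation-type approximate identity built from $\Psi(z)=|x|^{2}(\rho^{4}+1)^{-(n+4)/2}\in L^{1}(\mathbb{H}^{n})$, and pass to the limit on both sides. Each step you outline is correct and the justifications (skew-adjointness of the $X_{i}$, local integrability of $\rho^{-2n}$, dominated convergence for $\Phi_{\varepsilon}\to\Phi$, the approximate-identity limit) are the right ones.

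One point worth flagging: with the paper's normalization of the vector fields ($X_{i}=\partial_{x_{i}}+2x_{i+n}\partial_{t}$, etc.) your computation correctly yields the factor $4n(n+2)$, not the $n(n+2)$ appearing in the quoted formula for $c_{n}$. The factor of $4$ comes from the coefficient $4r^{2}$ in front of $\partial_{t}^{2}$ in the radial expression for $\mathcal{L}$, and it does not cancel. The discrepancy is almost certainly a transcription artifact --- Folland's original paper uses a different normalization of the group law --- and it is harmless for the rest of the present paper, which only uses the qualitative fact that $c_{n}\rho^{-2n}$ is a fundamental solution and the homogeneity estimates of Lemma~\ref{ro estimate}.
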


\begin{lemma} \label{ro estimate} Let $\alpha > 0$ and $\rho(x,t) = (|x|^4 + t^2)^{1/4}$, then 
\[
\left| \widetilde{X}^{J} \left( X^{I} \rho^{-\alpha} \right)(x,t) \right| \leq C \rho(x,t)^{-\alpha - d(I) - d(J)},
\]
holds for all $(x,t) \neq e$ and every pair of multi-indixes $I$ and $J$.
\end{lemma}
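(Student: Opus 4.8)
The plan is to combine the smoothness of $\rho^{-\alpha}$ away from the origin, the homogeneity of the operators $X^{I}$ and $\widetilde{X}^{J}$, and a compactness argument on the Koranyi unit sphere.

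First I would note that $\rho$ is smooth and strictly positive on $\mathbb{H}^{n}\setminus\{e\}$, so $\rho^{-\alpha}$ is smooth there, and since $X^{I}$ and $\widetilde{X}^{J}$ are smooth differential operators, the function $h:=\widetilde{X}^{J}(X^{I}\rho^{-\alpha})$ is continuous on $\mathbb{H}^{n}\setminus\{e\}$. Next I would check that $h$ is homogeneous of degree $-\alpha-d(I)-d(J)$. Starting from $\rho^{-\alpha}(r\cdot z)=r^{-\alpha}\rho^{-\alpha}(z)$ and applying $X^{I}$ in the variable $z$ together with the identity $X^{I}(f(r\cdot z))=r^{d(I)}(X^{I}f)(r\cdot z)$ with $f=\rho^{-\alpha}$, one gets $r^{d(I)}(X^{I}\rho^{-\alpha})(r\cdot z)=r^{-\alpha}(X^{I}\rho^{-\alpha})(z)$, i.e. $g:=X^{I}\rho^{-\alpha}$ satisfies $g(r\cdot z)=r^{-\alpha-d(I)}g(z)$. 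Applying $\widetilde{X}^{J}$ and the analogous scaling identity for right invariant vector fields once more yields $h(r\cdot z)=r^{-\alpha-d(I)-d(J)}h(z)$ for every $r>0$ and every $z\neq e$.

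Then I would use compactness. The set $\Sigma=\{z\in\mathbb{H}^{n}:\rho(z)=1\}$ is closed by continuity of $\rho$, and it is bounded in the Euclidean sense because on $\Sigma$ one has $|x|\leq\rho(z)=1$ and $|t|\leq\rho(z)^{2}=1$; hence $\Sigma$ is compact and $e\notin\Sigma$, so that $C:=\sup_{w\in\Sigma}|h(w)|<\infty$. Finally, given $z\neq e$, set $r=\rho(z)>0$ and $w=r^{-1}\cdot z$; then $\rho(w)=r^{-1}\rho(z)=1$, so $w\in\Sigma$, and the homogeneity of $h$ gives $|h(z)|=|h(r\cdot w)|=r^{-\alpha-d(I)-d(J)}|h(w)|\leq C\,\rho(z)^{-\alpha-d(I)-d(J)}$, which is exactly the asserted estimate.

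As for the main obstacle, there is essentially none of substance: the only points requiring a little care are verifying that iterated left and right invariant derivatives of $\rho^{-\alpha}$ are homogeneous of the predicted degree — a mechanical consequence of the scaling identities recorded above — and confirming the compactness of the unit Koranyi sphere.
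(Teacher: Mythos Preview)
Your proof is correct and follows the same strategy the paper sketches: the paper's proof consists of a single sentence invoking the homogeneity of $\rho^{-\alpha}$ and of the operators $X^{I}$, $\widetilde{X}^{J}$, and you have simply written out the standard details (the derived homogeneity of $\widetilde{X}^{J}X^{I}\rho^{-\alpha}$ and the compactness of the Koranyi unit sphere) that make this sketch rigorous.
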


\begin{proof}
The proof follows from the homogeneity of the kernel $\rho^{-\alpha}$, i.e.: 
$\rho(r \cdot (x,t))^{-\alpha} = r^{-\alpha} \rho(x,t)^{-\alpha}$, and from the homogeneity of the operators $\widetilde{X}^{J}$ and $X^{I}$.
\end{proof}

We conclude these preliminaries with the following supporting result.

\begin{lemma} \label{finite measure}
Let $0 < p < \infty$ and let $\mathcal{O}$ be a measurable set of $\mathbb{H}^n$ such that $|\mathcal{O}| < \infty$. If 
$h \in L^p(\mathbb{H}^n \setminus \mathcal{O})$, then
\[
|\{ z : |h(z)| < \epsilon\}| > 0, \,\,\,\,\,\, \text{for all} \,\,\, \epsilon > 0.
\]
\end{lemma}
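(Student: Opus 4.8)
The plan is to argue by contradiction, turning a failure of the conclusion into an infinite $L^p$-integral. Suppose that for some $\epsilon_0 > 0$ the set $E = \{ z : |h(z)| < \epsilon_0 \}$ is null, i.e. $|E| = 0$. Then $|h(z)| \geq \epsilon_0$ for almost every $z$, and in particular for almost every $z \in \mathbb{H}^n \setminus \mathcal{O}$.

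Next I would invoke the fact, recalled in Section 2, that the Haar measure on $\mathbb{H}^n$ is the Lebesgue measure on $\mathbb{R}^{2n+1}$, so that $|\mathbb{H}^n| = \infty$; since $|\mathcal{O}| < \infty$ by hypothesis, this forces $|\mathbb{H}^n \setminus \mathcal{O}| = \infty$. Combining this with the pointwise lower bound obtained above and the elementary Chebyshev-type estimate yields
\[
\int_{\mathbb{H}^n \setminus \mathcal{O}} |h(z)|^p \, dz \geq \epsilon_0^{\, p} \, |\{ z \in \mathbb{H}^n \setminus \mathcal{O} : |h(z)| \geq \epsilon_0 \}| = \epsilon_0^{\, p} \, |\mathbb{H}^n \setminus \mathcal{O}| = \infty,
\]
which contradicts the assumption $h \in L^p(\mathbb{H}^n \setminus \mathcal{O})$. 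Hence no such $\epsilon_0$ can exist, and therefore $|\{ z : |h(z)| < \epsilon \}| > 0$ for every $\epsilon > 0$.

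I do not anticipate any genuine obstacle: the only ingredients are that a set of finite measure cannot exhaust a space of infinite measure, together with the trivial lower bound $\int |h|^p \geq \epsilon_0^{\, p} |\{ |h| \geq \epsilon_0 \}|$. The single point worth making explicit is that $\mathbb{H}^n$ carries infinite Haar measure, which is immediate from its identification with $(\mathbb{R}^{2n+1}, \text{Lebesgue})$; everything else is a short, routine measure-theoretic verification.
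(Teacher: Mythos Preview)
Your argument is correct and is essentially identical to the paper's own proof: both proceed by contradiction, deduce from $|\{|h|<\epsilon_0\}|=0$ a pointwise lower bound on $|h|$, and then use that $|\mathbb{H}^n\setminus\mathcal{O}|=\infty$ together with the Chebyshev-type estimate to contradict $h\in L^p(\mathbb{H}^n\setminus\mathcal{O})$. The only cosmetic difference is that the paper writes the lower bound as $|h|\ge\epsilon_0/2$ (an unnecessary weakening), whereas you use the sharper $|h|\ge\epsilon_0$.
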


\begin{proof}
Suppose that there exists $\epsilon_0 > 0$ such that $|\{ z : |h(z)| < \epsilon_0 \}| = 0$, so $|h(z)| \geq \epsilon_{0}/2$ \,
a.e. $z \in \mathbb{H}^n$, which implies that
\[
\infty = |\mathcal{O}^c| = |\{ z \in \mathcal{O}^c : |h(z)| \geq \epsilon_{0}/2 \}| \leq (2/\epsilon_{0})^p \| h \|_{L^p(\mathcal{O}^c)}^{p},
\]
contradicting the assumption that $h \in L^p(\mathbb{H}^n \setminus \mathcal{O})$. Then, the lemma follows.
\end{proof}

\section{Hardy spaces on the Heisenberg group}

In this section, we briefly recall the definition and the atomic decomposition of the Hardy spaces on the Heisenberg group 
(see \cite{Foll-St}).

Given $N \in \mathbb{N}$, define 
\[
\mathcal{F}_{N}=\left\{ \varphi \in \mathcal{S}(\mathbb{H}^{n}) : \| \varphi \|_{\mathcal{S}(\mathbb{H}^{n}), \, N} \leq 1 \right\}.
\]
For any $f \in \mathcal{S}'(\mathbb{H}^{n})$, the grand maximal function of $f$ is defined by 
\[
\mathcal{M}_N f(z)=\sup\limits_{t>0}\sup\limits_{\varphi \in \mathcal{F}_{N}}\left\vert \left( f \ast \varphi_t \right)(z) \right\vert,
\]
where $\varphi_t(z) = t^{-2n-2} \varphi(t^{-1} \cdot z)$ with $t > 0$. 

We put
\begin{equation}
N_p = \left\{ \begin{array}{cc}
                 \lfloor Q(p^{-1}-1) \rfloor + 1, & \text{if} \,\, 0 < p \leq 1  \\
                 0, & \text{if} \,\,\,\, 1 < p \leq \infty
               \end{array} \right..
\end{equation}
The Hardy space $H^{p}(\mathbb{H}^{n})$ is the set of all $f \in S^{\prime}
(\mathbb{H}^{n})$ for which $\mathcal{M}_{N_p}f \in L^{p}(\mathbb{H}^{n})$. In this case we define 
$\left\Vert f \right\Vert _{H^{p}(\mathbb{H}^{n})} = \left\Vert \mathcal{M}_{N_p}f \right\Vert_{L^{p}(\mathbb{H}^{n})}$. For $p > 1$, it is well known that $H^p(\mathbb{H}^{n}) \equiv L^p(\mathbb{H}^{n})$ and for $p=1$, $H^1(\mathbb{H}^{n}) \subset L^1(\mathbb{H}^{n})$. On the range $0 < p < 1$, the spaces $H^{p}(\mathbb{H}^{n})$ and $L^{p}(\mathbb{H}^{n})$ are not comparable.

Now, we introduce the definition of atom in $\mathbb{H}^n$.

\begin{definition}
Let $0 < p \leq 1 < p_{0} \leq \infty$. Fix an integer $N \geq N_{p}$. A measurable function $a(\cdot)$ on $\mathbb{H}^{n}$ is called an 
$(p, p_{0}, N)$ - atom if there exists a $\rho$ - ball $B$ such that \newline
$a_{1})$ $\textit{supp}\left( a\right) \subset B$, \newline
$a_{2})$ $\left\Vert a \right\Vert_{L^{p_{0}}(\mathbb{H}^{n})} \leq 
\left\vert B \right\vert^{\frac{1}{p_{0}} - \frac{1}{p}}$, \newline
$a_{3})$ $\int a(z) \, z^{I} \, dz = 0$ for all multiindex $I$ such that $d(I) \leq N$.
\end{definition}
A such atom is also called an atom centered at the $\rho$ - ball $B$. We observe that every $(p, p_{0}, N)$ - atom 
$a(\cdot)$ belongs to $H^{p}(\mathbb{H}^{n})$. Moreover, there exists an universal constant $C > 0$ such that $\| a \|_{H^p(\mathbb{H}^n)} \leq C$ for all $(p, p_{0}, N)$ - atom $a(\cdot)$.

\begin{remark} \label{atomo trasladado}
It is easy to check that if $a(\cdot)$ is a $(p, p_{0}, N)$ - atom centered at the $\rho$ - ball $B(z_0, \delta)$, then the function 
$a_{z_0}(\cdot) := a(z_0 \cdot (\cdot))$ is a $(p, p_{0}, N)$ - atom centered at the $\rho$ - ball $B(e, \delta)$.
\end{remark}

\begin{definition} Let $0 < p \leq 1 < p_{0} \leq \infty$ and let $N \geq N_{p}$ be fixed. The space 
$H_{atom}^{p, p_{0}, N}\left( \mathbb{H}^{n}\right)$ is the set of all distributions $f \in \mathcal{S}'(\mathbb{H}^{n})$ such that it can be
written as
\begin{equation}
f=\sum\limits_{j=1}^{\infty }k_{j}a_{j}  \label{desc. atomica}
\end{equation}
in $\mathcal{S}'(\mathbb{H}^{n}),$ where $\left\{ k_{j}\right\}_{j=1}^{\infty }$ is a sequence of non negative numbers, the $a_{j}$'s are 
$(p, p_{0}, N)$ - atoms and $\sum_j k_j^p < \infty$. Then, one defines
\[
\left\Vert f\right\Vert_{H_{atom}^{p, p_{0}, N}(\mathbb{H}^n)} :=\inf \left\{ \sum_j k_j^p : f=\sum\limits_{j=1}^{\infty }k_{j}a_{j} \right\}
\]
where the infimum is taken over all admissible expressions as in (\ref{desc. atomica}).
\end{definition}

For $0 < p \leq 1 < p_{0} \leq \infty$ and $N \geq N_{p}$, Theorem 3.30 in \cite{Foll-St} asserts that
\[
H_{atom}^{p, p_{0}, N}(\mathbb{H}^n) = H^p(\mathbb{H}^n)
\] 
and the quantities $\left\Vert f\right\Vert_{H_{atom}^{p, p_{0}, N}(\mathbb{H}^n)}$ and 
$\left\Vert f\right\Vert _{H^{p}(\mathbb{H}^n)}$ are comparable. Moreover, if $f \in H^p(\mathbb{H}^n)$ then admits an atomic decomposition 
$f=\sum\limits_{j=1}^{\infty }k_{j}a_{j}$ such that 
\[
\sum_j k_j^p \leq C \, \|f \|_{H^{p}(\mathbb{H}^n)}^p,
\]
where $C$ does not depend on $f$.

\section{Calder\'on-Hardy spaces on the Heisenberg group}

Let $L^{q}_{loc}(\mathbb{H}^{n})$, $1 < q < \infty$, be the space of all measurable functions $g$ on $\mathbb{H}^{n}$ that belong locally to 
$L^{q}$ for compact sets of $\mathbb{H}^{n}$. We endowed $L^{q}_{loc}(\mathbb{H}^{n})$ with the topology generated by the seminorms
\[
|g|_{q, \, B} = \left( |B|^{-1} \int_{B} \, |g(w)|^{q}\, dw \right)^{1/q},
\] 
where $B$ is a $\rho$-ball in $\mathbb{H}^{n}$ and $|B|$ denotes its Haar measure.

For $g \in L^{q}_{loc}(\mathbb{H}^{n})$, we define a maximal function $\eta_{q, \, \gamma}(g; z)$ as
\[
\eta_{q, \, \gamma}(g; \, z) = \sup_{r > 0} r^{-\gamma} |g|_{q, \, B(z, r)},
\]
where $\gamma$ is a positive real number and $B(z, r)$ is the $\rho$-ball centered at $z$ with radius $r$.

Let $k$ a non negative integer and $\mathcal{P}_{k}$ the subspace of $L^{q}_{loc}(\mathbb{H}^{n})$ formed by all the polynomials of homogeneous degree at most $k$. We denote by $E^{q}_{k}$ the quotient space of $L^{q}_{loc}(\mathbb{H}^{n})$ by $\mathcal{P}_{k}$. If 
$G \in E^{q}_{k}$, we define the seminorm $\| G \|_{q, \, B} = \inf \left\{ |g|_{q, \, B} : g \in G \right\}$. The family of all these seminorms induces on $E^{q}_{k}$ the quotient topology.

Given a positive real number $\gamma$, we can write $\gamma = k + t$, where $k$ is a non negative integer and $0 < t \leq 1$. This decomposition is unique.

For $G \in E^{q}_{k}$, we define a maximal function $N_{q, \, \gamma}(G; z)$ as 
\[
N_{q, \, \gamma}(G; z) = \inf \left\{ \eta_{q, \, \gamma}(g; z) : g \in G \right\}.
\]

\begin{lemma} The maximal function $z \to N_{q; \, \gamma}(G; z)$ associated with a class $G$ in 
$E_{k}^{q}$ is lower semicontinuous.
\end{lemma}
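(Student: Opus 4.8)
The plan is to verify lower semicontinuity at an arbitrary point $z_0 \in \mathbb{H}^n$ via sequences: for any $z_j \to z_0$ I want $\liminf_j N_{q,\gamma}(G;z_j) \geq N_{q,\gamma}(G;z_0)$. Put $L := \liminf_j N_{q,\gamma}(G;z_j)$; if $L = \infty$ there is nothing to prove, so assume $L < \infty$ and pass to a subsequence (not relabeled) with $N_{q,\gamma}(G;z_j) \to L$. Fixing a representative $g_0 \in G$, every element of $G$ has the form $g_0 + P$, $P \in \mathcal{P}_k$; for each $j$ I choose a near-minimizer $g_j = g_0 + P_j \in G$ with $\eta_{q,\gamma}(g_j;z_j) \leq N_{q,\gamma}(G;z_j) + 1/j$, so that $\eta_{q,\gamma}(g_j;z_j) \leq L + 1$ for $j$ large.

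The heart of the argument is a compactness step for the polynomials $P_j$. Evaluating $\eta_{q,\gamma}(g_j;z_j) \leq L+1$ at the single radius $r = 1$ gives $|g_j|_{q, B(z_j,1)} \leq L+1$. Since the triangle inequality for $\rho$ and $z_j \to z_0$ force $B(z_0, 1/2) \subseteq B(z_j, 1)$ for $j$ large, and since $|B(z_j,1)| = |B(z_0,1)|$, this produces a uniform bound $\|g_j\|_{L^q(B(z_0,1/2))} \leq C$, whence $\|P_j\|_{L^q(B(z_0,1/2))} \leq C + \|g_0\|_{L^q(B(z_0,1/2))}$, a finite constant independent of $j$. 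As $\mathcal{P}_k$ is finite-dimensional and $g \mapsto \|g\|_{L^q(B(z_0,1/2))}$ is a norm on it, the $P_j$ are bounded in $\mathcal{P}_k$, so a further subsequence converges to some $P \in \mathcal{P}_k$, uniformly on compact sets. I then set $g := g_0 + P \in G$.

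Finally I would show $\eta_{q,\gamma}(g;z_0) \leq L$, which yields $N_{q,\gamma}(G;z_0) \leq \eta_{q,\gamma}(g;z_0) \leq L$. Fix $r > 0$. Then $g_j \to g$ in $L^q$ of any fixed ball (because $g_j - g = P_j - P$ tends to $0$ uniformly on compacts), and $\chi_{B(z_j,r)} \to \chi_{B(z_0,r)}$ a.e. (the sphere $\{w : \rho(z_0^{-1}\cdot w) = r\}$ has Haar measure zero since $r \mapsto |B(e,r)| = cr^Q$ is continuous, and $\rho$ is continuous), with domination by $\chi_{B(z_0,r+1)}$ for $j$ large. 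A routine passage to the limit (dominated convergence, using that $t \mapsto |t|^q$ is locally Lipschitz for $q > 1$) then gives $\int_{B(z_j,r)}|g_j|^q \to \int_{B(z_0,r)}|g|^q$, and dividing by $|B(z_j,r)| = |B(z_0,r)| = cr^Q$ gives $|g_j|_{q,B(z_j,r)} \to |g|_{q,B(z_0,r)}$. Hence $r^{-\gamma}|g|_{q,B(z_0,r)} = \lim_j r^{-\gamma}|g_j|_{q,B(z_j,r)} \leq \lim_j (N_{q,\gamma}(G;z_j)+1/j) = L$; taking the supremum over $r>0$ finishes the proof.

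The main obstacle is precisely this compactness step: $N_{q,\gamma}(G;\cdot)$ is an infimum over the infinite-dimensional coset $G$ of the lower semicontinuous functions $\eta_{q,\gamma}(g;\cdot)$, and such an infimum is not automatically lower semicontinuous; one really must exhibit a representative of $G$ that is (nearly) optimal at the limit point $z_0$, and this is exactly what the uniform $L^q$-bound near $z_0$ coming from the single radius $r=1$, together with the finite-dimensionality of $\mathcal{P}_k$, provides.
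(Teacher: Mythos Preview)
Your proof is correct and follows a genuinely different route from the paper. The paper argues by contradiction using only the lower semicontinuity of each $\eta_{q,\gamma}(g;\cdot)$: from $N_{q,\gamma}(G;z_0)\le\eta_{q,\gamma}(g;z_0)\le\liminf_{z\to z_0}\eta_{q,\gamma}(g;z)$ for every \emph{fixed} $g\in G$ it concludes that no sequence $z\to z_0$ can satisfy $\eta_{q,\gamma}(g_z;z)\le N_{q,\gamma}(G;z_0)-\epsilon$. As you rightly flag in your final paragraph, this is precisely the delicate step---an infimum of lower semicontinuous functions is not automatically lower semicontinuous, and the paper's contradiction does not quite close as written because the representative $g_z$ is allowed to vary with $z$. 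Your approach sidesteps this by exploiting the finite-dimensionality of $\mathcal{P}_k$: the uniform $L^q$-bound on a fixed ball (coming from the single scale $r=1$) forces the near-minimizing polynomials $P_j$ into a compact set, and the limit polynomial $P$ furnishes a \emph{single} representative $g=g_0+P\in G$ that witnesses $N_{q,\gamma}(G;z_0)\le L$. This compactness argument is the standard way to make such results rigorous, and your proof is in fact more complete than the paper's on this point; the only cost is that it uses the specific structure of the coset $G$ (a translate of a finite-dimensional subspace), which the paper's intended shortcut would not need if it went through.
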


\begin{proof}
It is easy to check that $\eta_{q, \gamma}(g; \, \cdot)$ is lower semicontinuous for every $g \in G$ 
(i.e: the set $\{ z : \eta_{q, \gamma}(g; \, z) > \alpha \}$ is open for all $\alpha \in \mathbb{R}$). Then, 
for $z_0 \in \mathbb{H}^n$ we have 
\[
N_{q; \, \gamma}(G; z_0) \leq \eta_{q, \gamma}(g; \, z_0) \leq \liminf_{z \to z_0} \eta_{q, \gamma}(g; \, z) \,\,\,\, \text{for all} \,\, 
g \in G.
\]
So,
\begin{equation} \label{N eta}
N_{q; \, \gamma}(G; z_0) - \epsilon < \liminf_{z \to z_0} \eta_{q, \gamma}(g; \, z), \,\,\,\, \text{for all} \,\, \epsilon > 0 \,\,\, 
\text{and all} \,\, g \in G.
\end{equation}
Suppose $\displaystyle{\liminf_{z \to z_0}} \, N_{q; \, \gamma}(G; z) < N_{q; \, \gamma}(G; z_0)$. Then, there exists $\epsilon >0$ such that
\[
\liminf_{z \to z_0} N_{q; \, \gamma}(G; z) < N_{q; \, \gamma}(G; z_0) - \epsilon.
\]
Thus, there exists $\delta_0 > 0$ such that for every $0 < \delta < \delta_0$ there exist $z \in B(z_0, \delta) \setminus \{ z_0 \}$ and 
$g = g_z \in G$ such that
\[
\eta_{q, \gamma}(g; \, z) \leq N_{q; \, \gamma}(G; z_0) - \epsilon,
\]
which contradicts (\ref{N eta}). So, it must be 
$N_{q; \, \gamma}(G; z_0) \leq \displaystyle{\liminf_{z \to z_0}} \, N_{q; \, \gamma}(G; z)$. Then, the lemma follows.
\end{proof}

\begin{definition}
Let $0 < p < \infty$ be fixed, we say that an element $G \in E^{q}_{k}$ belongs to the Calder\'on-Hardy space
$\mathcal{H}^{p}_{q, \, \gamma}(\mathbb{H}^{n})$ if the maximal function $N_{q, \, \gamma}(G; \, \cdot \,) \in L^{p}(\mathbb{H}^{n})$. 
The "norm" of $G$ in $\mathcal{H}^{p}_{q, \, \gamma}(\mathbb{H}^{n})$ is defined as 
\[
\| G \|_{\mathcal{H}^{p}_{q, \, \gamma}(\mathbb{H}^{n})} = \| N_{q, \, \gamma}(G; \, \cdot \,) \|_{L^p(\mathbb{H}^{n})}.
\]
\end{definition}

\begin{lemma} \label{puntual 1} Let $G \in E^{q}_{k}$ with $N_{q, \, \gamma}(G; z_0) < \infty,$ for some $z_0 \in \mathbb{H}^{n}$. Then:

$(i)$ There exists a unique $g \in G$ such that $\eta_{q, \, \gamma} (g; z_0) < \infty$ and, therefore, 
$\eta_{q, \, \gamma} (g; z_0) = N_{q, \, \gamma}(G; z_0)$.

$(ii)$ For any $\rho$-ball $B$, there is a constant $c$ depending on $z_0$ and $B$ such that if $g$ is the unique representative of $G$ given in $(i)$, then
\[
\|G\|_{q, \, B} \leq |g|_{q, \, B} \leq c \, \eta_{q, \, \gamma} (g; z_0) = c \, N_{q, \, \gamma}(G; z_0).
\]

The constant $c$ can be chosen independently of $z_0$ provided that $z_0$ varies in a compact set.
\end{lemma}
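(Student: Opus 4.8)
The plan is to reduce both parts to a single fact about polynomials and then read everything off from the definitions; the only step with genuine content is that fact, which I would establish first.

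\emph{Step 1 (the polynomial lemma).} I would show: if $P\in\mathcal P_k$ and $\eta_{q,\gamma}(P;z_0)<\infty$ for some $z_0\in\mathbb{H}^n$, then $P\equiv 0$. Using Remark \ref{cambio de centro}, the dilation $u=r\cdot v$ and (\ref{homog dim}),
\[
|P|_{q,B(z_0,r)}^{q}=\frac{1}{|B(e,1)|}\int_{B(e,1)}\bigl|P\bigl(z_0\cdot(r\cdot v)\bigr)\bigr|^{q}\,dv .
\]
From the explicit group law, left translation does not raise the homogeneous degree (each $x_j$ becomes $x_j^0+y_j$ and $t$ becomes $t^0+s+\sum x_j^0 J_{j\ell}y_\ell$, so $u\mapsto P(z_0\cdot u)$ is again in $\mathcal P_k$); write $P(z_0\cdot u)=\sum_I b_I u^I$ and let $d_0=\min\{d(I):b_I\neq 0\}\le k$. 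As $r\to 0^+$ the degree-$d_0$ part dominates, so $r^{-\gamma}|P|_{q,B(z_0,r)}\sim C_0\,r^{d_0-\gamma}$ with $C_0>0$ (a nonzero homogeneous polynomial has positive $L^q$-norm on $B(e,1)$); since $\gamma=k+t$ with $0<t\le1$ and $d_0\le k$, the exponent $d_0-\gamma\le-t<0$, which forces $\eta_{q,\gamma}(P;z_0)=\infty$, a contradiction. This is the main obstacle; everything else is bookkeeping with the definitions and the triangle inequality for $\rho$.

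\emph{Step 2 (part $(i)$).} Because $g\mapsto|g|_{q,B}$, and hence $\eta_{q,\gamma}(\,\cdot\,;z_0)$, is subadditive and positively homogeneous, and $N_{q,\gamma}(G;z_0)<\infty$, I can pick $g\in G$ with $\eta_{q,\gamma}(g;z_0)<\infty$ (any sufficiently late term of a minimizing sequence for the infimum defining $N_{q,\gamma}(G;z_0)$). If $g'\in G$ also had $\eta_{q,\gamma}(g';z_0)<\infty$, then $P:=g-g'\in\mathcal P_k$ would have $\eta_{q,\gamma}(P;z_0)\le\eta_{q,\gamma}(g;z_0)+\eta_{q,\gamma}(g';z_0)<\infty$, so $P\equiv0$ by Step 1; this gives uniqueness. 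Since then every other representative of $G$ has $\eta_{q,\gamma}(\,\cdot\,;z_0)=\infty$, the infimum defining $N_{q,\gamma}(G;z_0)$ is attained exactly at $g$, i.e. $\eta_{q,\gamma}(g;z_0)=N_{q,\gamma}(G;z_0)$.

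\emph{Step 3 (part $(ii)$).} The bound $\|G\|_{q,B}\le|g|_{q,B}$ is immediate from the definition of $\|\cdot\|_{q,B}$, and the final equality is Step 2. For the middle inequality I would write $B=B(z_1,\delta)$, set $R=\rho(z_0^{-1}\cdot z_1)+\delta$, note $B(z_1,\delta)\subset B(z_0,R)$ by the triangle inequality for $\rho$, and compare averages:
\[
|g|_{q,B(z_1,\delta)}^{q}\le\frac{|B(z_0,R)|}{|B(z_1,\delta)|}\,|g|_{q,B(z_0,R)}^{q}=\Bigl(\tfrac{R}{\delta}\Bigr)^{Q}|g|_{q,B(z_0,R)}^{q}\le\Bigl(\tfrac{R}{\delta}\Bigr)^{Q}R^{\gamma q}\,\eta_{q,\gamma}(g;z_0)^{q},
\]
so $c=(R/\delta)^{Q/q}R^{\gamma}$ works. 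Finally, if $z_0$ ranges over a compact set $K$, then $\rho(z_0^{-1}\cdot z_1)\le\max_{z_0\in K}\rho(z_0^{-1}\cdot z_1)=:M_K<\infty$ by continuity of $\rho$, so $R\le M_K+\delta$ and $c$ may be replaced by the larger, $z_0$-independent constant $((M_K+\delta)/\delta)^{Q/q}(M_K+\delta)^{\gamma}$.
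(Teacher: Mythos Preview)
Your proof is correct and carries out in full the argument the paper only sketches by reference. The paper's proof consists of the observation that polynomials in $\mathcal P_k$ can be recentered at any $z_0\in\mathbb H^n$ (via the Taylor formula in \cite{Bonfi}) and then defers to \cite[Lemma~3]{segovia}; your Step~1 makes this recentering explicit (writing $P(z_0\cdot u)\in\mathcal P_k$ and analyzing the small-$r$ behavior), and Steps~2--3 are the standard consequences, so the two approaches coincide.
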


\begin{proof} Since every polynomial of homogeneous degree at most $k$ can be centered at $z_0$, with $z_0$ being an arbitrary point of 
$\mathbb{H}^n$, by the formula that appears in \cite[Section 5.2, p. 272]{Bonfi}) for the Taylor polynomial of a smooth function, it follows that the argument used to prove \cite[Lemma 3]{segovia} works on $\mathbb{H}^n$ as well.
\end{proof}

\begin{corollary} If $\{ G_{j} \}$ is a sequence of elements of $E^{q}_{k}$ converging to $G$ in 
$\mathcal{H}^{p}_{q, \, \gamma}(\mathbb{H}^{n})$, then $\{ G_{j} \}$ converges to $G$ in $E^{q}_{k}$.
\end{corollary}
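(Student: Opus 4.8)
The plan is to reduce convergence in $E^{q}_{k}$ to the pointwise bound of Lemma \ref{puntual 1}$(ii)$. Recall that a sequence converges in $E^{q}_{k}$ precisely when $\|G_{j}-G\|_{q,\,B}\to 0$ for every $\rho$-ball $B$ (here $G_{j}-G$ makes sense since $E^{q}_{k}$ is a quotient of a vector space, and from Minkowski's inequality for $|\cdot|_{q,B}$ one gets that $N_{q,\gamma}$ is subadditive, so $G_{j}-G\in\mathcal{H}^{p}_{q,\gamma}(\mathbb{H}^{n})$). By the hypothesis of convergence in $\mathcal{H}^{p}_{q,\gamma}(\mathbb{H}^{n})$, the quantities $\epsilon_{j}:=\|N_{q,\gamma}(G_{j}-G;\,\cdot\,)\|_{L^{p}(\mathbb{H}^{n})}$ satisfy $\epsilon_{j}\to 0$.

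Now fix an arbitrary $\rho$-ball $B$ and fix once and for all a non-empty compact set $K\subset\mathbb{H}^{n}$ (for instance $K=\overline{B(e,1)}$). By the last assertion of Lemma \ref{puntual 1}, there is a constant $c=c(K,B)$, independent of the base point as long as it lies in $K$, such that $\|H\|_{q,\,B}\leq c\,N_{q,\gamma}(H;z_{0})$ for every $H\in E^{q}_{k}$ and every $z_{0}\in K$ with $N_{q,\gamma}(H;z_{0})<\infty$. For each fixed $j$ we have $\int_{K}N_{q,\gamma}(G_{j}-G;z)^{p}\,dz\leq\epsilon_{j}^{p}$, so Chebyshev's inequality shows that the set of $z\in K$ with $N_{q,\gamma}(G_{j}-G;z)^{p}>2\epsilon_{j}^{p}/|K|$ has measure at most $|K|/2$; hence there exists a point $z_{j}\in K$ with $N_{q,\gamma}(G_{j}-G;z_{j})\leq (2/|K|)^{1/p}\,\epsilon_{j}<\infty$. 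Applying the compact-uniform bound at $z_{j}$ gives
\[
\|G_{j}-G\|_{q,\,B}\ \leq\ c\,N_{q,\gamma}(G_{j}-G;z_{j})\ \leq\ c\,(2/|K|)^{1/p}\,\epsilon_{j}.
\]

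Since $\epsilon_{j}\to 0$ while $c$ and $|K|$ do not depend on $j$, we conclude $\|G_{j}-G\|_{q,\,B}\to 0$; as $B$ was arbitrary, $G_{j}\to G$ in $E^{q}_{k}$. The one point that needs care is the selection of the base points $z_{j}$: the constant in Lemma \ref{puntual 1}$(ii)$ depends a priori on the base point, so one cannot merely choose, for each $j$, an arbitrary point where $N_{q,\gamma}(G_{j}-G;\,\cdot\,)$ is finite — the constants $c(z_{j},B)$ could blow up. Confining all the $z_{j}$ to a single fixed compact set $K$ and invoking the compact-uniform version of the constant resolves this, and the Chebyshev/averaging step is precisely what guarantees such $z_{j}\in K$ exist. (Alternatively, one could extract a subsequence along which $N_{q,\gamma}(G_{j}-G;z_{0})\to 0$ for a.e.\ $z_{0}$, fix one such $z_{0}$, and upgrade subsequential to full convergence by a sub-subsequence argument; the averaging route above gives the full sequence directly.)
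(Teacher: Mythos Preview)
Your proof is correct and rests on the same key ingredient as the paper, namely Lemma~\ref{puntual 1}$(ii)$ together with the compact-uniformity of its constant. The paper's version is a touch more direct: instead of using Chebyshev on an auxiliary compact $K$ to extract a good base point $z_{j}$, it takes the closure of $B$ itself as the compact set, so that $\|G-G_{j}\|_{q,B}\leq c\,N_{q,\gamma}(G-G_{j};z)$ holds with a uniform $c$ for all $z\in B$, and then integrates this pointwise bound in $L^{p}$ over $B$ to get
\[
\|G-G_{j}\|_{q,B}\leq c\,\|\chi_{B}\|_{L^{p}}^{-1}\,\|\chi_{B}\,N_{q,\gamma}(G-G_{j};\,\cdot\,)\|_{L^{p}}\leq c\,\|G-G_{j}\|_{\mathcal{H}^{p}_{q,\gamma}}
\]
in one line, bypassing the point-selection step altogether.
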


\begin{proof} For any $\rho$-ball $B$, by $(ii)$ of Lemma \ref{puntual 1}, we have
\[
\| G- G_{j} \|_{q, \, B} \leq c \, \| \chi_{B} \|_{L^p(\mathbb{H}^{n})}^{-1} 
\| \chi_{B} \,\, N_{q, \, \gamma}(G - G_{j}; \, \cdot \,) \|_{L^p(\mathbb{H}^{n})} 
\leq c \, \| G - G_{j} \|_{\mathcal{H}^{p}_{q, \, \gamma}(\mathbb{H}^{n})},
\]
which proves the corollary.
\end{proof}

\begin{lemma} \label{series in Eqk} Let $\{ G_{j} \}$ be a sequence in $E^{q}_{k}$ such that for a given point $z_0 \in \mathbb{H}^n$, the series 
$\sum_j N_{q, \, \gamma}(G_{j}; \, z_0 )$ is finite. Then:

$(i)$ The series $\sum_j G_j$ converges in $E_{k}^{q}$ to an element $G$ and 
\[
N_{q, \, \gamma}(G; \, z_0 ) \leq \sum_j N_{q, \, \gamma}(G_{j}; \, z_0 ).
\]

$(ii)$ If $g_j$ is the unique representative of $G_j$ satisfying
$\eta_{q, \, \gamma} (g_j; z_0) = N_{q, \, \gamma}(G_j; z_0)$, then $\sum_j g_j$ converges in $L^{q}_{loc}(\mathbb{H}^{n})$ to a function 
$g$ that is the unique representative of $G$ satisfying $\eta_{q, \, \gamma} (g; z_0) = N_{q, \, \gamma}(G; z_0)$
\end{lemma}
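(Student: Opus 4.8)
The plan is to mimic the classical argument from \cite[Lemma 4]{segovia}, adapted to $\mathbb{H}^n$. For part $(i)$, the first step is to invoke Lemma \ref{puntual 1}$(i)$: since each $N_{q,\gamma}(G_j; z_0) < \infty$ (being a summand of a finite series), there is a unique representative $g_j \in G_j$ with $\eta_{q,\gamma}(g_j; z_0) = N_{q,\gamma}(G_j; z_0)$. Next, for any fixed $\rho$-ball $B$, I would apply Lemma \ref{puntual 1}$(ii)$ to obtain a constant $c = c(z_0, B)$ with $|g_j|_{q, B} \leq c\, N_{q,\gamma}(G_j; z_0)$ for every $j$; summing over $j$ shows $\sum_j |g_j|_{q, B} \leq c \sum_j N_{q,\gamma}(G_j; z_0) < \infty$. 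Since this holds for every $\rho$-ball $B$ and $L^q(B)$ is complete, the partial sums of $\sum_j g_j$ form a Cauchy sequence in each seminorm $|\cdot|_{q, B}$, hence the series converges in $L^q_{loc}(\mathbb{H}^n)$ to some function $g$. Let $G$ be the class of $g$ in $E^q_k$; then $\sum_j G_j \to G$ in $E^q_k$, since the quotient seminorms $\|\cdot\|_{q, B}$ are dominated by $|\cdot|_{q, B}$.

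To finish $(i)$, I must bound $N_{q,\gamma}(G; z_0)$. For each radius $r > 0$,
\[
r^{-\gamma} |g|_{q, B(z_0, r)} = r^{-\gamma}\Bigl| \sum_j g_j \Bigr|_{q, B(z_0,r)} \leq \sum_j r^{-\gamma} |g_j|_{q, B(z_0, r)} \leq \sum_j \eta_{q,\gamma}(g_j; z_0) = \sum_j N_{q,\gamma}(G_j; z_0),
\]
using the triangle inequality in $L^q(B(z_0, r))$ and the definition of $\eta_{q,\gamma}$. Taking the supremum over $r > 0$ gives $\eta_{q,\gamma}(g; z_0) \leq \sum_j N_{q,\gamma}(G_j; z_0) < \infty$; in particular $N_{q,\gamma}(G; z_0) \leq \eta_{q,\gamma}(g; z_0) \leq \sum_j N_{q,\gamma}(G_j; z_0)$, which is the desired inequality.

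For part $(ii)$, the function $g = \sum_j g_j$ constructed above is already the limit of the $g_j$'s in $L^q_{loc}(\mathbb{H}^n)$ and is a representative of $G$; the previous paragraph shows $\eta_{q,\gamma}(g; z_0) < \infty$. By Lemma \ref{puntual 1}$(i)$ applied to $G$ (whose maximal function at $z_0$ is finite by part $(i)$), there is a \emph{unique} representative of $G$ with finite $\eta_{q,\gamma}(\cdot; z_0)$, and it satisfies $\eta_{q,\gamma}(\cdot; z_0) = N_{q,\gamma}(G; z_0)$; hence $g$ must be that representative, proving $(ii)$.

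The main obstacle is essentially bookkeeping: one has to make sure the constant $c$ from Lemma \ref{puntual 1}$(ii)$ does not depend on $j$ (it does not, since it depends only on $z_0$ and $B$) so that the series $\sum_j |g_j|_{q, B}$ can be controlled, and one must check that the representative $g_j$ produced by Lemma \ref{puntual 1}$(i)$ is indeed the one for which $\eta_{q,\gamma}(g_j; z_0)$ equals $N_{q,\gamma}(G_j; z_0)$ rather than merely being finite — but that is exactly the content of Lemma \ref{puntual 1}$(i)$. No feature of $\mathbb{H}^n$ beyond the completeness of $L^q$ on $\rho$-balls and the already-established Lemma \ref{puntual 1} is needed, so the Euclidean proof transfers verbatim.
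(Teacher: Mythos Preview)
Your proposal is correct and follows exactly the approach the paper intends: the paper's own proof is simply ``The proof is similar to the one given in \cite[Lemma 4]{segovia},'' and what you have written is precisely that argument transported to $\mathbb{H}^n$ via Lemma \ref{puntual 1}. There is nothing to add.
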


\begin{proof} The proof is similar to the one given in \cite[Lemma 4]{segovia}.
\end{proof}

\begin{proposition} \label{cerrado} The space $\mathcal{H}^{p}_{q, \, \gamma}(\mathbb{H}^{n})$, $0 < p < \infty$, is complete.
\end{proposition}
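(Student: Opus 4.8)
The plan is to show that $\mathcal{H}^{p}_{q,\gamma}(\mathbb{H}^{n})$ is complete by the standard criterion that a normed (or quasi-normed) space is complete iff every absolutely summable series is summable. So I would start with a sequence $\{G_j\} \subset \mathcal{H}^{p}_{q,\gamma}(\mathbb{H}^{n})$ such that $\sum_j \|G_j\|_{\mathcal{H}^{p}_{q,\gamma}(\mathbb{H}^{n})}^{p} < \infty$ (using the $p$-th powers since for $p<1$ the natural metric is built from $\|\cdot\|^p$), and try to produce $G \in \mathcal{H}^{p}_{q,\gamma}(\mathbb{H}^{n})$ with $\sum_{j\le M} G_j \to G$.

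The first step is to locate a point where things are finite. Since each $\|N_{q,\gamma}(G_j;\,\cdot\,)\|_{L^p}$ is finite and their $p$-th powers sum, the function $\sum_j N_{q,\gamma}(G_j;\,\cdot\,)$ lies in $L^p(\mathbb{H}^{n})$ (for $p\le 1$ one has $\big(\sum_j c_j\big)^p \le \sum_j c_j^p$ pointwise, so its $L^p$-quasinorm is controlled by $\sum_j \|N_{q,\gamma}(G_j;\,\cdot\,)\|_{L^p}^p$); in particular it is finite almost everywhere, so we may pick $z_0 \in \mathbb{H}^{n}$ with $\sum_j N_{q,\gamma}(G_j; z_0) < \infty$. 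Now invoke Lemma~\ref{series in Eqk}: the series $\sum_j G_j$ converges in $E^{q}_{k}$ to some $G$, with $N_{q,\gamma}(G;z_0) \le \sum_j N_{q,\gamma}(G_j;z_0) < \infty$, and moreover the chosen representatives $g_j$ sum in $L^{q}_{loc}$ to the distinguished representative $g$ of $G$.

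The main point is then to get the pointwise bound $N_{q,\gamma}(G - \sum_{j\le M} G_j;\, z) \le \sum_{j > M} N_{q,\gamma}(G_j;\, z)$ for a.e.\ $z$, which gives convergence in $\mathcal{H}^p_{q,\gamma}$ of the partial sums to $G$ (and also $G\in\mathcal{H}^p_{q,\gamma}$ by taking $M=0$). To do this I would run the argument of the previous two steps not just at one fixed $z_0$ but simultaneously at almost every $z$: the set where $\sum_j N_{q,\gamma}(G_j;\,z) < \infty$ has full measure, and at each such $z$, Lemma~\ref{series in Eqk} applies to the tail sequence $\{G_j\}_{j>M}$, yielding an element $R_M \in E^q_k$ with $N_{q,\gamma}(R_M; z) \le \sum_{j>M} N_{q,\gamma}(G_j;z)$; the content is that $R_M$ equals $G - \sum_{j\le M}G_j$ as an element of $E^q_k$, which follows because both have the same $L^q_{loc}$-representative (the $g_j$'s telescope correctly by the uniqueness in part $(ii)$ of Lemma~\ref{series in Eqk}), the identification of the distinguished representative being independent of which base point is used once finiteness holds. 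Granting $N_{q,\gamma}\big(G - \sum_{j\le M}G_j;\,z\big) \le \sum_{j>M} N_{q,\gamma}(G_j;z)$ a.e., we conclude
\[
\Big\| G - \sum_{j\le M} G_j \Big\|_{\mathcal{H}^p_{q,\gamma}(\mathbb{H}^n)}^{p} \le \int_{\mathbb{H}^n} \Big( \sum_{j>M} N_{q,\gamma}(G_j;z)\Big)^{p} dz \le \sum_{j>M} \| N_{q,\gamma}(G_j;\,\cdot\,) \|_{L^p(\mathbb{H}^n)}^{p} \xrightarrow[M\to\infty]{} 0,
\]
which proves completeness.

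The step I expect to be the main obstacle is the consistency across base points: verifying that the element $G$ furnished by Lemma~\ref{series in Eqk} at the fixed point $z_0$ genuinely serves as the limit of $\sum_{j\le M} G_j$ when the tail estimate is run at a different point $z$, i.e.\ that the a.e.-defined pointwise maximal inequality really is attached to one and the same class $G \in E^q_k$. This is bookkeeping about the distinguished representatives $g_j$ and $g$ and the fact that $E^q_k$-convergence of $\sum_j G_j$ is intrinsic (does not see $z_0$), but it needs to be written carefully; everything else is the routine $p\le 1$ subadditivity of $t\mapsto t^p$ together with direct appeals to Lemma~\ref{series in Eqk}.
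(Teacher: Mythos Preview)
Your proposal is correct and follows essentially the same route as the paper: establish the Riesz--Fisher property, show $\sum_j N_{q,\gamma}(G_j;\cdot)\in L^p$, invoke Lemma~\ref{series in Eqk} to produce $G\in E^q_k$, and then bound $N_{q,\gamma}\big(G-\sum_{j\le M}G_j;\,\cdot\big)$ by the tail $\sum_{j>M}N_{q,\gamma}(G_j;\cdot)$. Two minor remarks: the paper treats the full range $0<p<\infty$ uniformly by working with $\underline{p}=\min\{p,1\}$ and a Fatou argument, whereas your pointwise inequality $(\sum c_j)^p\le\sum c_j^p$ only covers $p\le 1$ (the case $p>1$ is of course easier, via Minkowski); and your ``consistency across base points'' worry dissolves once you note that the limit $G$ in Lemma~\ref{series in Eqk} is a limit in the $E^q_k$ topology, which is intrinsic and independent of $z_0$, so applying the lemma to the tail $\{G_j\}_{j>M}$ at any other point $z$ where the sum is finite necessarily produces the same class $G-\sum_{j\le M}G_j$.
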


\begin{proof} Given $0 < p < \infty$, let $\underline{p} := \min\{ p, 1 \}$. It is enough to show that $\mathcal{H}^{p}_{q, \, \gamma}$ has the Riesz-Fisher property: given any sequence $\{ G_j \}$ in 
$\mathcal{H}^{p}_{q, \, \gamma}$ such that 
\[
\sum_{j} \| G_j \|_{\mathcal{H}^{p}_{q, \, \gamma}}^{\underline{p}} < \infty,
\]
the series $\sum_{j} G_j$ converges in $\mathcal{H}^{p}_{q, \, \gamma}$. \\
Let $m \geq 1$ be fixed, then 
\[
\left\| \sum_{j=m}^{k} N_{q, \, \gamma}(G_{j}; \, \cdot \,) \right\|_{L^p}^{\underline{p}} \leq \sum_{j=m}^{k} \left\| N_{q, \, \gamma}(G_{j}; \, \cdot \,) \right\|_{L^p}^{\underline{p}} \leq \sum_{j=m}^{\infty} \| G_j \|_{\mathcal{H}^{p}_{q, \, \gamma}}^{\underline{p}} 
=: \alpha_m < \infty,
\]
for every $k \geq m$. Thus 
\[
\int_{\mathbb{H}^{n}} \, \left( \alpha_m^{-1/ \underline{p}} \, \sum_{j=m}^{k} N_{q, \, \gamma}(G_{j}; \, z ) \right)^{p} \, dz
\]
\[ 
\leq \int_{\mathbb{H}^{n}} \left( \left\| \sum_{j=m}^{k} N_{q, \, \gamma}(G_{j}; \, \cdot \,) \right\|_{L^p}^{-1} \, \sum_{j=m}^{k} N_{q, \, \gamma}(G_{j};  z ) \right)^{p} \, dz =1, \,\,\, \forall \, k \geq m,
\]
by applying Fatou's lemma as $k \rightarrow \infty$, we obtain
\[
\int_{\mathbb{H}^{n}} \, \left( \alpha_m^{-1/\underline{p}} \, \sum_{j=m}^{\infty} N_{q, \, \gamma}(G_{j}; \, z ) \right)^{p} \, dz \leq 1,
\]
so
\begin{equation}
\left\|  \sum_{j=m}^{\infty} N_{q, \, \gamma}(G_{j}; \, \cdot \,) \right\|_{L^p}^{\underline{p}} \leq \alpha_m = \sum_{j=m}^{\infty} \| G_j \|_{\mathcal{H}^{p}_{q, \, \gamma}}^{\underline{p}} < \infty, \,\,\,\, \forall \, m \geq 1  \label{serie}.
\end{equation}
Taking $m = 1$ in (\ref{serie}), it follows that $\sum_{j} N_{q, \, \gamma}(G_{j}; z)$ is finite a.e. 
$z \in \mathbb{H}^{n}$. Then, by $(i)$ of Lemma \ref{series in Eqk}, the series $\sum_j G_j$ converges in $E_{k}^{q}$ to an element $G$. Now 
\[
N_{q, \, \gamma}\left( G - \sum_{j=1}^{k} G_j; z \right) \leq \sum_{j=k+1}^{\infty} N_{q, \, \gamma} (G_j; z),
\]
from this and (\ref{serie}) we get
\[
\left\| G - \sum_{j=1}^{k} G_j \right\|_{\mathcal{H}^{p}_{q, \, \gamma}}^{\underline{p}} \leq \sum_{j=k+1}^{\infty} 
\| G_j \|_{\mathcal{H}^{p}_{q, \, \gamma}}^{\underline{p}},
\]
and since the right-hand side tends to $0$ as $k \rightarrow \infty$, the series $\sum_{j}G_j$ converges to $G$ in $\mathcal{H}^{p}_{q, \, \gamma}(\mathbb{H}^{n})$.
\end{proof}

\begin{proposition} \label{g distrib}
If $g \in L^{q}_{loc}(\mathbb{H}^{n})$, $1 < q < \infty$, and there is a point $z_0 \in \mathbb{H}^{n}$ such that 
$\eta_{q, \, \gamma} (g ; z_0) < \infty$, then $g \in \mathcal{S}'(\mathbb{H}^{n})$.
\end{proposition}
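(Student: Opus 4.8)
The goal is to show that the growth condition $\eta_{q,\gamma}(g;z_0)<\infty$ forces $g$ to define a tempered distribution on $\mathbb{H}^n$, i.e. that the linear functional $\phi \mapsto \int_{\mathbb{H}^n} g(z)\phi(z)\,dz$ is well-defined and continuous on $\mathcal{S}(\mathbb{H}^n)$. The plan is to control $\int |g(z)\phi(z)|\,dz$ by decomposing $\mathbb{H}^n$ into the unit $\rho$-ball around $z_0$ together with dyadic annuli $A_j = B(z_0,2^{j+1})\setminus B(z_0,2^j)$, $j\geq 0$. On each annulus I would use H\"older's inequality with exponents $q$ and $q'$ to split the integral as $\big(\int_{A_j}|g|^q\big)^{1/q}\big(\int_{A_j}|\phi|^{q'}\big)^{1/q'}$.

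For the first factor, since $A_j \subset B(z_0,2^{j+1})$ and $|B(z_0,2^{j+1})| = c\,(2^{j+1})^Q$, the hypothesis $\eta_{q,\gamma}(g;z_0)<\infty$ gives
\[
\left(\int_{A_j}|g(w)|^q\,dw\right)^{1/q} \leq |B(z_0,2^{j+1})|^{1/q}\, |g|_{q,B(z_0,2^{j+1})} \leq C\, 2^{jQ/q}\, 2^{j\gamma}\, \eta_{q,\gamma}(g;z_0).
\]
For the second factor, because $\phi \in \mathcal{S}(\mathbb{H}^n)$ there is, for any $N$, a bound $|\phi(z)| \leq \|\phi\|_{\mathcal{S}(\mathbb{H}^n),N}\,(1+\rho(z))^{-N}$, and on $A_j$ one has $\rho(z_0^{-1}\cdot z)\geq 2^j$, so by the triangle inequality for $\rho$ and continuity, $(1+\rho(z)) \gtrsim \max\{1, 2^j - \rho(z_0)\} \gtrsim 2^j$ for $j$ large (absorbing the finitely many small $j$ into a constant depending on $z_0$). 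Hence $\big(\int_{A_j}|\phi|^{q'}\big)^{1/q'} \leq C\, \|\phi\|_{\mathcal{S}(\mathbb{H}^n),N}\, 2^{jQ/q'}\, 2^{-jN}$, using $|A_j|\leq C 2^{jQ}$. Combining, the $j$-th term is bounded by $C\, \eta_{q,\gamma}(g;z_0)\,\|\phi\|_{\mathcal{S}(\mathbb{H}^n),N}\, 2^{j(Q+\gamma-N)}$, which is summable in $j$ once $N > Q+\gamma$. The contribution from $B(z_0,1)$ is handled directly by H\"older and $\eta_{q,\gamma}(g;z_0)<\infty$ together with the sup bound on $\phi$. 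This yields $|\langle g,\phi\rangle| \leq C(z_0)\,\eta_{q,\gamma}(g;z_0)\,\|\phi\|_{\mathcal{S}(\mathbb{H}^n),N}$ for $N = \lfloor Q+\gamma\rfloor + 1$, proving both that the integral converges absolutely and that the functional is continuous, hence $g \in \mathcal{S}'(\mathbb{H}^n)$.

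The only mildly delicate point is the passage from $\rho(z_0^{-1}\cdot z)\geq 2^j$ on $A_j$ to a lower bound on $1+\rho(z)$: one uses $|\rho(z)-\rho(z_0^{-1})| \leq \rho(z_0^{-1}\cdot z)$ is the wrong direction, so instead invoke $\rho(z_0^{-1}\cdot z) \leq \rho(z_0^{-1}) + \rho(z) = \rho(z_0)+\rho(z)$, which gives $\rho(z)\geq 2^j - \rho(z_0)$, positive and comparable to $2^j$ for $j \geq j_0(z_0)$; the finitely many annuli with $j<j_0$ contribute a finite constant depending on $z_0$. This is the step where the dependence of the final constant on $z_0$ enters, but since we only need membership in $\mathcal{S}'(\mathbb{H}^n)$ (not a uniform bound), that is harmless. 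Everything else is a routine application of H\"older's inequality, the homogeneity $|B(z,r)|=cr^Q$, and the definition of the Schwartz seminorms.
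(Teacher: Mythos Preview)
Your proof is correct and follows essentially the same route as the paper: a dyadic decomposition into annuli together with H\"older's inequality (the paper phrases this step as ``Jensen's inequality'') and the Schwartz decay $|\phi(w)|\leq \|\phi\|_{\mathcal{S}(\mathbb{H}^n),N}(1+\rho(w))^{-N}$, leading to the same convergence condition $N>Q+\gamma$. The one cosmetic difference is that the paper first reduces to $z_0=e$ via the translation $\tau_{z_0^{-1}}$ and the identity $\eta_{q,\gamma}(\tau_{z_0^{-1}}g;\,e)=\eta_{q,\gamma}(g;\,z_0)$, which sidesteps your ``mildly delicate point'' about passing from $\rho(z_0^{-1}\cdot z)\geq 2^j$ to a lower bound on $\rho(z)$; your direct argument via $\rho(z)\geq 2^j-\rho(z_0)$ for large $j$ is perfectly fine, just slightly less tidy.
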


\begin{proof}
We first assume that $z_0 = e = (0, 0)$. Given $\varphi \in \mathcal{S}(\mathbb{H}^{n})$ and $N > \gamma + Q$ (where $Q=2n+2$), we have 
that $|\varphi(w)| \leq \| \varphi \|_{\mathcal{S}(\mathbb{H}^{n}), \, N} \, (1 + \rho(w))^{-N}$ for all $w \in \mathbb{H}^{n}$. So
\begin{equation*}
\begin{split}
\left| \int_{\mathbb{H}^{n}} g(w) \varphi(w) dw \right| & \leq \| \varphi \|_{\mathcal{S}(\mathbb{H}^{n}), \, N} 
\int_{\rho(w) < 1} |g(w)| (1 + \rho(w))^{-N} dw \\
& + \| \varphi \|_{\mathcal{S}(\mathbb{H}^{n}), \, N} \sum_{j=0}^{\infty} \int_{2^j \leq \rho(w) < 2^{j+1}} |g(w)| 
(1+\rho(w))^{-N} dw \\
& \lesssim  \| \varphi \|_{\mathcal{S}(\mathbb{H}^{n}), \, N} \, \eta_{q, \gamma}(g; e) \\
& +  \| \varphi \|_{\mathcal{S}(\mathbb{H}^{n}), \, N} \, \eta_{q, \gamma}(g; e) \, \sum_{j=0}^{\infty} 2^{j(\gamma + Q - N)},
\end{split}
\end{equation*}
where in the last estimate we use the Jensen's inequality. Since $N > \gamma + Q$ it follows that $g \in \mathcal{S}'(\mathbb{H}^{n})$. For the case $z_0 \neq e$ we apply the translation operator $\tau_{z_0}$ defined by $(\tau_{z_0}g)(z) = g(z_{0}^{-1} \cdot z)$ and use the fact that $\eta_{q, \gamma} \left( \tau_{z_{0}^{-1}} g; \, e \right) = \eta_{q, \gamma}(g; \, z_0)$ (see Remark \ref{cambio de centro}).
\end{proof}

\begin{proposition} \label{Lg dist} Let $g \in L^q_{loc} \cap \mathcal{S}'(\mathbb{H}^n)$ and $f = \mathcal{L} g$ in 
$\mathcal{S}'(\mathbb{H}^n)$. If $\phi \in \mathcal{S}(\mathbb{H}^n)$ and $N > Q+2$, then
\[
(M_{\phi}f)(z) := \sup \left\{ |(f \ast \phi_t)(w)| : \rho(w^{-1} \cdot z) < t, \, 0 < t < \infty \right\}  
\]
\[
\leq C \| \phi \|_{\mathcal{S}(\mathbb{H}^{n}), N}  \,\,\, \eta_{q, 2}(g; \, z)
\]
holds for all $z \in \mathbb{H}^n$.
\end{proposition}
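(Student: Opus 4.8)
The plan is to exploit the identity $f\ast\phi_t=(\mathcal{L}g)\ast\phi_t=g\ast(\mathcal{L}\phi_t)$, reducing the whole estimate to a convolution of $g$ against a rescaled Schwartz function. Since $\mathcal{L}$ is homogeneous of degree $2$, one has $\mathcal{L}(\phi_t)=t^{-2}(\mathcal{L}\phi)_t$, so writing $\psi:=\mathcal{L}\phi\in\mathcal{S}(\mathbb{H}^n)$ we get
\[
(f\ast\phi_t)(w)=t^{-2}\,(g\ast\psi_t)(w),\qquad \psi_t(z)=t^{-2n-2}\psi(t^{-1}\cdot z).
\]
So it suffices to bound $t^{-2}|(g\ast\psi_t)(w)|$ for $\rho(w^{-1}\cdot z)<t$, and the factor $t^{-2}$ is exactly what will pair with the degree-$2$ homogeneity built into $\eta_{q,2}$.

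The next step is a standard kernel-decay plus annular-decomposition argument, carried out after reducing to $z=e$ by the translation trick used in Proposition \ref{g distrib} (replace $g$ by $\tau_{z_0^{-1}}g$ and use $\eta_{q,2}(\tau_{z_0^{-1}}g;e)=\eta_{q,2}(g;z_0)$). With $z=e$ and $\rho(w)<t$, write
\[
t^{-2}(g\ast\psi_t)(w)=t^{-2}\int_{\mathbb{H}^n} g(u)\,\psi_t(u^{-1}\cdot w)\,du,
\]
split the domain into $\rho(u)<2t$ and the dyadic annuli $2^{j+1}t\le\rho(u)<2^{j+2}t$ for $j\ge 0$, and on each piece use the Schwartz decay $|\psi_t(u^{-1}\cdot w)|\le t^{-2n-2}\|\psi\|_{\mathcal S,N}(1+t^{-1}\rho(u^{-1}\cdot w))^{-N}$ together with $\rho(u^{-1}\cdot w)\ge\rho(u)-\rho(w)\ge\rho(u)-t\gtrsim 2^j t$ on the $j$-th annulus. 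On each ball $B(e,2^{j+2}t)$ apply Hölder's inequality in the form $\int_{B}|g|\le |B|^{1/q'}\big(\int_B|g|^q\big)^{1/q}=|B|\,|g|_{q,B}\le |B|\,(2^{j+2}t)^{2}\,\eta_{q,2}(g;e)$ by the very definition of $\eta_{q,2}$. Collecting the powers of $t$ and of $2^j$, one finds the geometric series $\sum_{j\ge 0}2^{j(Q+2-N)}$, which converges precisely because $N>Q+2$; this yields the claimed bound $t^{-2}|(g\ast\psi_t)(w)|\lesssim\|\psi\|_{\mathcal S,N}\,\eta_{q,2}(g;e)$, and since $\|\psi\|_{\mathcal S,N}=\|\mathcal{L}\phi\|_{\mathcal S,N}\lesssim\|\phi\|_{\mathcal S,N+2}$, a harmless adjustment of the Schwartz index finishes it (one may absorb this by simply taking $N$ large enough from the start, or state the constant in terms of $\|\phi\|_{\mathcal{S}(\mathbb H^n),N}$ with $N>Q+2$ as in the statement).

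The one genuinely delicate point is that the convolution $g\ast\psi_t$ must be interpreted through the distributional identity $f\ast\phi_t=g\ast\mathcal{L}\phi_t$ rather than as an a priori convergent integral: $g$ is only in $L^q_{loc}$, so before writing the integral one should note that $g\in\mathcal{S}'(\mathbb{H}^n)$ (Proposition \ref{g distrib}) and that the pairing $\langle g,\mathcal{L}\phi_t(\,\cdot^{-1}\!\cdot w)\rangle$ is given by an absolutely convergent integral — which is exactly what the annular estimate above establishes. So the rigorous order is: first justify absolute convergence of the integral defining $(g\ast\psi_t)(w)$ via the dyadic bound (this is where $N>Q+2$ and $\eta_{q,2}(g;z)<\infty$ enter), then identify it with $t^{2}(f\ast\phi_t)(w)$ using $\mathcal{L}(f\ast\phi_t)$-type manipulations valid in $\mathcal{S}'$, and finally read off the stated pointwise inequality. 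Everything else is routine scaling bookkeeping.
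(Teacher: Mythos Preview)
Your proposal is correct and follows essentially the same route as the paper: both use the homogeneity identity $(f\ast\phi_t)(w)=t^{-2}(g\ast(\mathcal L\phi)_t)(w)$, a Schwartz decay bound on the kernel, a dyadic annular decomposition, H\"older/Jensen to produce $\eta_{q,2}(g;z)$, and the geometric series $\sum 2^{j(Q+2-N)}$ forcing $N>Q+2$. The only cosmetic difference is that the paper handles the kernel bound via the inequality $1+\rho(u)\le 2(1+\rho(u^{-1}\cdot t^{-1}(z^{-1}\cdot w)))$ (valid when $\rho(z^{-1}\cdot w)<t$) and thereby obtains $\|\phi\|_{\mathcal S(\mathbb H^n),N}$ directly---since the seminorm already controls $X^I\phi$ for $d(I)\le N$ and $\mathcal L$ has degree $2\le N$---so no ``$N+2$'' adjustment is needed.
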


\begin{proof} Let $\rho(w^{-1} \cdot z) < t$, since $f = \mathcal{L} g$ in $\mathcal{S}'(\mathbb{H}^n)$ a computation gives
\[
(f \ast \phi_t)(w) = t^{-2} (g \ast (\mathcal{L} \phi)_t)(w) = t^{-2} \int g(u) (\mathcal{L} \phi)_t (u^{-1} \cdot w) du.
\]
Applying Remark \ref{cambio de centro} and (\ref{homog dim}), we get
\begin{equation} \label{g conv Lphi}
(f \ast \phi_t)(w) = t^{-2} \int g(z \cdot tu) (\mathcal{L} \phi) (u^{-1} \cdot t^{-1}(z^{-1} \cdot w)) du.
\end{equation}
Being $\rho(z^{-1} \cdot w) < t$, a computation gives
\begin{equation} \label{1 mas ro}
1 + \rho(u) \leq 2 \left( 1 + \rho(u^{-1} \cdot t^{-1}(z^{-1} \cdot w)) \right).
\end{equation}
On the other hand, for $N > 2$, we have
\begin{equation} \label{L ineq}
\left|(\mathcal{L} \phi) (u^{-1} \cdot t^{-1}(z^{-1} \cdot w))\right| \left( 1 + \rho(u^{-1} \cdot t^{-1}(z^{-1} \cdot w)) \right)^N \leq 
\| \phi \|_{\mathcal{S}(\mathbb{H}^{n}), N}.
\end{equation}
Now, from (\ref{1 mas ro}) and (\ref{L ineq}), it follows that
\begin{equation} \label{L ineq 2}
\left|(\mathcal{L} \phi) (u^{-1} \cdot t^{-1}(z^{-1} \cdot w))\right| \leq 2^N \| \phi \|_{\mathcal{S}(\mathbb{H}^{n}), N} (1 + \rho(u))^{-N},
\end{equation}
for $\rho(z^{-1} \cdot w) < t$. Then, (\ref{g conv Lphi}), (\ref{L ineq 2}) and (\ref{homog dim}) give
\[
2^{-N} \| \phi \|_{\mathcal{S}(\mathbb{H}^{n}), N}^{-1} \left|(f \ast \phi_t)(w) \right| \leq  t^{-2} \int |g(z \cdot tu)|(1 + \rho(u))^{-N} du.
\]
\[
= t^{-2} t^{-Q} \int |g(z \cdot u)|(1 + \rho(t^{-1} u))^{-N} du
\]
\[
\leq t^{-2} t^{-Q} \int_{\rho(u) < t} |g(z \cdot u)|(1 + \rho(t^{-1} u))^{-N} du 
\]
\[
+ \,\, t^{-2} t^{-Q} \int_{2^j t \leq \rho(u) < 2^{j+1}t} |g(z \cdot u)| \, \rho(t^{-1} u)^{-N} du 
\]
\[
\lesssim \left( 1 + \sum_{j=0}^{\infty} 2^{j(Q+2-N)} \right) \, \eta_{q, 2}(g; z),
\]
for $\rho(z^{-1} \cdot w) < t$. Applying Jensen's inequality and taking $N > Q+2$ in the last inequality the proposition follows.
\end{proof}

\begin{remark} \label{LG definition} We observe that if $G \in \mathcal{H}^{p}_{q, \, 2}(\mathbb{H}^{n})$, then 
$N_{q, \, 2}(G; z_0) < \infty,$ for some $z_0 \in \mathbb{H}^{n}$. By $(i)$ in Lemma \ref{puntual 1} there exists $g \in G$ such that 
$N_{q, \, 2}(G; z_0) = \eta_{q, \, 2}(g; z_0)$; from Proposition \ref{g distrib} it follows that $g  \in \mathcal{S}'(\mathbb{H}^{n})$. So 
$\mathcal{L} g$ is well defined in sense of distributions. On the other hand, since any two representatives of $G$ differ in a polynomial 
of homogeneous degree at most $1$, we get that $\mathcal{L} g$ is independent of the representative $g \in G$ chosen. Therefore, for 
$G \in \mathcal{H}^{p}_{q, \, 2}(\mathbb{H}^{n})$, we define $\mathcal{L} G$ as the distribution $\mathcal{L} g$, where $g$ is any representative of $G$.
\end{remark}

\begin{theorem} \label{L 1-1}
If $G \in \mathcal{H}^{p}_{q, 2}(\mathbb{H}^{n})$ and $\mathcal{L} G = 0$, then $G \equiv 0$.
\end{theorem}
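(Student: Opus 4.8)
The plan is to prove that the distinguished representative of $G$ provided by Lemma~\ref{puntual 1} is in fact a polynomial of homogeneous degree at most $1$, which forces $G$ to be the zero class of $E^{q}_{1}$. To this end, I would first fix the representative: since $G \in \mathcal{H}^{p}_{q,2}(\mathbb{H}^{n})$, the maximal function $N_{q,2}(G;\cdot)$ belongs to $L^{p}(\mathbb{H}^{n})$, hence is finite at almost every point; fix such a $z_{0}$. By part $(i)$ of Lemma~\ref{puntual 1} there is a unique $g \in G$ with $\eta_{q,2}(g;z_{0}) = N_{q,2}(G;z_{0}) < \infty$; by Proposition~\ref{g distrib}, $g \in \mathcal{S}'(\mathbb{H}^{n})$, and by Remark~\ref{LG definition} the hypothesis $\mathcal{L}G = 0$ means exactly $\mathcal{L}g = 0$ in $\mathcal{S}'(\mathbb{H}^{n})$. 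Note also that $\eta_{q,2}(g;z_{0})<\infty$ records the growth bound $|g|_{q,B(z_{0},r)} \le C r^{2}$ for every $r>0$.

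Next I would identify $g$ as a polynomial. The vector fields $X_{1},\dots,X_{2n}$ together with their first-order brackets span the Lie algebra of $\mathbb{H}^{n}$, so $\mathcal{L}$ is hypoelliptic; therefore $g$ coincides almost everywhere with a smooth $\mathcal{L}$-harmonic function. Using the interior estimate $|g(z)| \lesssim |g|_{q,B(z,1)}$ (a consequence of the solid mean value property for $\mathcal{L}$), the inclusion $B(z,1) \subseteq B\big(z_{0},\,1+\rho(z_{0}^{-1}\cdot z)\big)$, and the growth bound above, one gets that $g$ has at most polynomial growth. By the Liouville theorem for $\mathcal{L}$-harmonic functions of polynomial growth on homogeneous Carnot groups (a standard fact; see, e.g., \cite{Bonfi}), $g$ is then a polynomial. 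If its highest homogeneous degree $D$ were $\ge 3$, its homogeneous part $g_{D}$ of degree $D$ would be a nonzero polynomial, and the relation $(r\cdot z)^{I}=r^{d(I)}z^{I}$ would give $r^{-2}|g|_{q,B(z_{0},r)} \sim r^{D-2}\,|g_{D}|_{q,B(e,1)} \to \infty$ as $r\to\infty$, contradicting $\eta_{q,2}(g;z_{0})<\infty$. Hence $g$ is a polynomial of homogeneous degree at most $2$, and we may write $g = p_{0} + h$, where $p_{0}\in\mathcal{P}_{1}$ and $h$ is the homogeneous-degree-$2$ part of $g$ (with $h\equiv 0$ allowed).

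It remains to rule out $h\not\equiv 0$, and here I would exploit that $N_{q,2}(G;\cdot)\in L^{p}(\mathbb{H}^{n})$ while $|\mathbb{H}^{n}|=\infty$. Every $f\in G$ has the form $f=\widetilde{p}+h$ with $\widetilde{p}\in\mathcal{P}_{1}$, and since left translation on $\mathbb{H}^{n}$ does not change the highest homogeneous part of a polynomial, for each fixed $z$ one has $r^{-2}f\big(z\cdot(r\cdot u)\big)\to h(u)$ uniformly for $u\in\overline{B(e,1)}$ as $r\to\infty$. Writing $w=z\cdot(r\cdot u)$ in the integral defining $|f|_{q,B(z,r)}$ (using $B(z,r)=z\cdot(r\cdot B(e,1))$, Remark~\ref{cambio de centro} and \eqref{homog dim}) and passing to the limit yields
\[
\lim_{r\to\infty} r^{-2}\,|f|_{q,B(z,r)} \;=\; \Bigl( |B(e,1)|^{-1}\!\int_{B(e,1)} |h(u)|^{q}\,du \Bigr)^{1/q} \;=:\; A ,
\]
where $A>0$ since $h$ is a nonzero polynomial, and $A$ depends on neither $z$ nor $f$. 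Hence $\eta_{q,2}(f;z)\ge A$ for every $f\in G$ and every $z$, so $N_{q,2}(G;z)\ge A$ for all $z\in\mathbb{H}^{n}$, forcing $\|N_{q,2}(G;\cdot)\|_{L^{p}(\mathbb{H}^{n})}=\infty$ --- a contradiction. Therefore $h\equiv 0$, so $g\in\mathcal{P}_{1}$ and $G\equiv 0$.

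The step I expect to be the main obstacle is the identification of $g$ with a polynomial: it is exactly here that one needs both the hypoellipticity of $\mathcal{L}$ and a Liouville-type theorem on $\mathbb{H}^{n}$. Once $g$ is known to be a polynomial, the sharpening of its degree to at most $2$ and the elimination of the homogeneous-degree-$2$ part are elementary dilation computations.
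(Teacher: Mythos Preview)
Your argument is correct and follows the same route as the paper: a Liouville theorem shows the distinguished representative $g$ is a polynomial, the finiteness of $\eta_{q,2}(g;z_{0})$ caps its homogeneous degree at $2$, and the condition $N_{q,2}(G;\cdot)\in L^{p}(\mathbb{H}^{n})$ then eliminates the degree-$2$ part. Two small remarks: the Liouville result you need is precisely Theorem~2 of \cite{Geller} (not \cite{Bonfi}, which is the Taylor-formula paper), and the paper invokes it directly rather than passing through hypoellipticity and a growth bound; for the last step the paper reaches the contradiction via Lemma~\ref{finite measure} (choosing $z_{0}$ with $N_{q,2}(G;z_{0})$ arbitrarily small), which is simply the contrapositive of your uniform lower bound $N_{q,2}(G;\cdot)\ge A>0$.
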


\begin{proof} Let $G \in \mathcal{H}^{p}_{q, 2}(\mathbb{H}^{n})$ and $g \in G$ such that 
$\eta_{q, \, 2}(g; z_0) = N_{q, \, 2}(G; z_0) < \infty$ for some $z_0 \in \mathbb{H}^n \setminus \{ e \}$. If $\mathcal{L} g = 0$, by 
Theorem 2 in \cite{Geller}, we have that $g$ is a polynomial. To conclude the proof it is suffices to show that $g$ is a polynomial of homogeneous degree less than or equal to $1$. Suppose $g(z) = \displaystyle{\sum_{d(I) \leq k} c_I z^I}$, with $k \geq 2$. Then, for 
$\delta \geq 2 \rho(z_0)$
\begin{equation*}
\begin{split}
[\eta_{q, 2}(g; z_0)]^q  \delta^{(2-k)q} & \geq C \delta^{-Q - kq} \int_{\rho(z_0^{-1} \cdot w) < \delta} \left|\sum_{d(I) \leq k} c_I \, 
w^I \right|^q dw \\
& \geq C \delta^{-Q - kq} \int_{\rho(w) < \delta/2} \left|\sum_{d(I) \leq k} c_I \, w^I \right|^q dw \\
& = C 2^{-Q - kq} \int_{\rho(z) < 1} \left|\sum_{d(I) = k} c_I \, z^I \right|^q dz + o_{\delta}(1).
\end{split}
\end{equation*}
Thus if $k > 2$, letting $\delta \to \infty$, we have
\[
\int_{\rho(z) < 1} \left|\sum_{d(I) = k} c_I \, z^I \right| dz = 0,
\]
which implies that $c_I=0$ for $d(I)=k$, contradicting the assumption that $g$ is of homogeneous degree $k$. On the other hand, if $k=2$ letting $\delta \to \infty$ we obtain that
\[
\int_{\rho(z) < 1} \left|\sum_{d(I) = 2} c_I \, z^I \right| dz \lesssim [\eta_{q, 2}(g; z_0)]^q = [N_{q, 2}(G; z_0)]^q.
\]
Since $N_{q, 2}(G; \, \cdot) \in L^p(\mathbb{H}^n)$, to apply Lemma \ref{finite measure} with 
$\mathcal{O} = \{ z : N_{q, 2}(G; \, z) > 1 \}$ and $h = N_{q, 2}(G; \, \cdot)$, the amount $N_{q, 2}(G; z_0)$ can be taken arbitrarily small and so
\[
\int_{\rho(z) < 1} \left|\sum_{d(I) = 2} c_I \, z^I \right| dz = 0,
\]
which contradicts that $g$ is of homogeneous degree $2$. Thus $g$ is a polynomial of homogeneous degree less than or equal to $1$, as we wished to prove. 
\end{proof}

If $a$ is a bounded function with compact support, its potential $b$, defined as 
\[
b(z) := \left( a \ast c_n \, \rho^{-2n} \right)(z) = c_n \int_{\mathbb{H}^{n}} \rho(w^{-1} \cdot z)^{-2n} a(w) dw,
\]
is a locally bounded function and, by Theorem \ref{fund solution}, $\mathcal{L} b = a$ in the sense of distributions. For these potentials, we have the following result.

\

In the sequel, $Q = 2n+2$ and $\beta$ is the constant in \cite[Corollary 1.44]{Foll-St}, we observe that $\beta \geq 1$ 
(see \cite[p. 29]{Foll-St}).

\begin{lemma} \label{a conv ro} Let $a(\cdot)$ be an $(p, p_{0}, N)$ - atom centered at the $\rho$ - ball $B(z_0, \delta)$ 
with $N \geq N_p$. If 
\[
b(z) = \left( a \ast c_n \, \rho^{-2n} \right)(z),
\] 
then, for $\rho(z_0^{-1} z) \geq 2 \beta^{2}\delta$ and every multi-index $I$ there exists a positive constant $C_{I}$ such that 
\[
\left| (X^{I}b)(z) \right| \leq C_{I} \, \delta^{2+Q} |B|^{-\frac{1}{p}} \rho(z_{0}^{-1} \cdot z)^{-Q-d(I)}
\]
holds.
\end{lemma}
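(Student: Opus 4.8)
The plan is to exploit the vanishing moments of the atom together with a Taylor expansion of the kernel $\rho^{-2n}$. Since $a$ has moments vanishing up to homogeneous degree $N \ge N_p$, for $z$ far from $B(z_0,\delta)$ we write
\[
(X^{I}b)(z) = c_n \int_{B(z_0,\delta)} \left[ (X^{I}\rho^{-2n})(w^{-1}\cdot z) - P_{z_0,z}(w) \right] a(w)\, dw,
\]
where $P_{z_0,z}(\cdot)$ is the right Taylor polynomial of $w \mapsto (X^{I}\rho^{-2n})(w^{-1}\cdot z)$ of homogeneous degree at most $N$, expanded around $w = z_0$; the subtracted polynomial integrates to zero against $a$ by property $a_3)$. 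Here one uses that $X^{I}$ commutes with convolution so that $X^{I}b = a \ast c_n(X^{I}\rho^{-2n})$, and that differentiating $\rho^{-2n}$ in the variable $z$ is, via the relation $(X^{I}f)\ast g = f \ast (\widetilde X^{I}g)$, controlled by the mixed estimate of Lemma \ref{ro estimate}.

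\textbf{Key steps.} First I would invoke the left-invariant Taylor formula with remainder on $\mathbb{H}^n$ (as in \cite[Corollary 1.44]{Foll-St}, which introduces the constant $\beta$): there is a constant so that for a smooth function $h$,
\[
\Bigl| h(w^{-1}\cdot z) - \!\!\sum_{d(J)\le N}\!\! \frac{(w^{-1})^{J}}{J!}\, \bigl(\widetilde X^{J} h(\cdot^{-1}\cdot z)\bigr)(z_0^{-1}) \Bigr|
\le C\,\rho(z_0^{-1}\cdot w)^{N+1} \sup_{d(J)=N+1,\ \rho(u)\le \beta^{m}\rho(z_0^{-1}\cdot w)} \bigl| \widetilde X^{J} h \bigr|(\cdots),
\]
so that, taking $h = c_n X^{I}\rho^{-2n}$ and $w \in B(z_0,\delta)$, the remainder is governed by the $\rho$-distance $\rho(z_0^{-1}\cdot w) < \delta$ raised to the power $N+1$ and a supremum of $\widetilde X^{J}X^{I}\rho^{-2n}$ over a ball; the point $\rho(z_0^{-1}\cdot z) \ge 2\beta^2\delta$ guarantees, via the triangle inequality for $\rho$ and the factor $\beta$ appearing in the iterated Taylor remainder, that this supremum is taken over points $u$ with $\rho(u^{-1}\cdot z) \sim \rho(z_0^{-1}\cdot z)$, staying away from the singularity of $\rho^{-2n}$. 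Second, I apply Lemma \ref{ro estimate} to bound that supremum by $C\,\rho(z_0^{-1}\cdot z)^{-Q - d(I) - (N+1)}$ (using $2n + 2 = Q$). Third, I integrate: the inner bound is $C\,\delta^{N+1}\,\rho(z_0^{-1}\cdot z)^{-Q-d(I)-(N+1)}$, and using $a_2)$ with $p_0 = \infty$ (or Hölder with a general $p_0$) together with $|\mathrm{supp}\,a| \le |B| = c\delta^{Q}$ gives $\int_{B}|a| \le |B|^{1-1/p} = c^{1-1/p}\delta^{Q(1-1/p)}$. Multiplying, the powers of $\delta$ combine to $\delta^{N+1+Q-Q/p}$, and since $N + 1 > N_p \ge Q(1/p - 1)$ — i.e. $N + 1 + Q - Q/p > 0$ — and $\delta \le \tfrac{1}{2\beta^2}\rho(z_0^{-1}\cdot z)$, one may absorb the surplus powers of $\delta$ into powers of $\rho(z_0^{-1}\cdot z)$, arriving at the claimed $C_I\,\delta^{2+Q}|B|^{-1/p}\rho(z_0^{-1}\cdot z)^{-Q-d(I)}$ after rewriting $\delta^{-Q/p} = c^{-1/p}|B|^{-1/p}$ and checking the residual exponent of $\delta$ equals $2+Q$ (which forces using exactly $N+1$ terms in the expansion; more vanishing moments would only improve decay in $\rho$ while changing the $\delta$ power, so one should expand to the minimal order, not to order $N$).

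\textbf{Main obstacle.} The delicate point is the bookkeeping of the constant $\beta$ from the Heisenberg Taylor inequality: unlike in $\mathbb{R}^n$, the remainder in \cite[Corollary 1.44]{Foll-St} is controlled by the derivative evaluated not on the segment joining $e$ to $w^{-1}\cdot z$ but over a $\rho$-ball of radius $\beta^{\,m}\rho(w^{-1}\cdot z_0)$ (with $m$ the order), which is why the hypothesis is stated with $2\beta^2\delta$ rather than, say, $2\delta$; one has to verify that with $\rho(z_0^{-1}\cdot z)\ge 2\beta^2\delta$ every point $u$ entering the supremum still satisfies $\rho(u^{-1}\cdot z) \ge \tfrac12\rho(z_0^{-1}\cdot z)$ so that Lemma \ref{ro estimate} applies with the right-hand side comparable to a fixed power of $\rho(z_0^{-1}\cdot z)$. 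A secondary technical nuisance is that $X^{I}$ acting on a convolution passes to the second factor, so one must first establish $X^{I}b = a\ast(c_n X^{I}\rho^{-2n})$ pointwise for $z$ outside a neighborhood of $\mathrm{supp}\,a$ (legitimate since $\rho^{-2n}$ is smooth away from $e$ and $a$ is bounded with compact support), and only then expand; I expect the rest to be routine estimation.
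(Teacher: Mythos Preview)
Your approach is the paper's: subtract a Taylor polynomial from the kernel (legitimate by the atom's vanishing moments), control the remainder via the stratified Taylor inequality and Lemma~\ref{ro estimate}, then integrate using the size condition on $a$. The only substantive difference is the \emph{order} of the expansion. The paper expands to homogeneous degree~$1$ (remainder of order~$2$), which produces $\delta^{2}$ directly and hence exactly $\delta^{2+Q}|B|^{-1/p}$ with no absorption step; this is also why the hypothesis is $2\beta^{2}\delta$. Your initial plan to expand to order~$N$ runs into the $\beta$--bookkeeping problem you yourself flag: at remainder order $N+1$ the supremum in \cite[Corollary~1.44]{Foll-St} ranges over $\rho(v)\le\beta^{N+1}\rho(u)$, so the stated hypothesis $\rho(z_0^{-1}\cdot z)\ge 2\beta^{2}\delta$ no longer forces the evaluation points away from the singularity when $N>1$. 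You recover in your parenthetical (``expand to the minimal order''), and that minimal order is~$1$---exactly what the paper does, using only the degree~$\le 1$ moment conditions of the atom. Two minor corrections: with $\alpha=2n=Q-2$ and $d(J)=N+1$, Lemma~\ref{ro estimate} gives exponent $-Q-d(I)-(N-1)$, not $-Q-d(I)-(N+1)$; and the condition $N+1+Q-Q/p>0$ is irrelevant here---no positivity of a $\delta$--exponent is needed, only the trivial bound $(\delta/\rho)^{N-1}\le C$.
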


\begin{proof} We fix a multiindex $I$, by the left invariance of the operator $X^I$ and Remark \ref{cambio de centro}, we have that
\begin{equation*}
\begin{split}
(X^{I}b)(z) & = c_n \int_{B(z_0, \delta)} \left(X^{I} \rho^{-2n} \right)(w^{-1} \cdot z) \, a(w) dw \\
& = c_n \int_{B(e, \delta)} \left(X^{I} \rho^{-2n} \right)(u^{-1} \cdot z_0^{-1} \cdot z) \, a(z_0 \cdot u) du,
\end{split}
\end{equation*}
for each $z \notin B(z_0, 2\beta^{2}\delta)$. By the condition $a_3)$ of the atom $a(\cdot)$ and Remark \ref{atomo trasladado}, it follows for $z \notin B(z_0, 2\beta^{2}\delta)$ that
\begin{equation} \label{Xb}
(X^{I}b)(z) = c_n \int_{B(e, \delta)} \, \left[ \left(X^{I} \rho^{-2n} \right)(u^{-1} \cdot z_0^{-1} \cdot z) - q(u^{-1}) \right] 
a(z_0 \cdot u) \, du,
\end{equation}
where $u \to q(u^{-1})$ is the right Taylor polynomial at $e$ of homogeneous degree $1$ of the function 
\[
u \to \left(X^{I} \rho^{-2n} \right)(u^{-1} \cdot z_{0}^{-1} \cdot z).
\] 
Then by the right-invariant version of the Taylor inequality in \cite[Corollary 1.44]{Foll-St},
\begin{equation*}
\left| \left(X^{I} \rho^{-2n} \right)(u^{-1} \cdot z_{0}^{-1} \cdot z) - q(u^{-1}) \right| \lesssim \hspace{0.3cm} \rho(u)^{2} \,\,\, \times 
\end{equation*}
\begin{equation} \label{Taylor ineq}
\sup_{\rho(v) \leq \beta^{2}\rho(u), d(J)=2} \left|\left(\widetilde{X}^{J} \left(X^{I} \rho^{-2n} \right) \right)
(v \cdot z_{0}^{-1} \cdot z) \right|.
\end{equation}
Now, for $u \in B(e, \delta)$, $z_{0}^{-1} \cdot z \notin B(e, 2\beta^{2}\delta)$ and $\rho(v) \leq \beta^{2} \rho(u)$, we obtain that
$\rho(z_{0}^{-1} \cdot z) \geq 2\rho(v)$ and hence $\rho(v \cdot z_{0}^{-1} \cdot z) \geq \rho(z_{0}^{-1} \cdot z)/2$, then 
(\ref{Taylor ineq}) and Lemma \ref{ro estimate} with $\alpha = 2n$ and $d(J)=2$ allow us to get
\[
\left| \left(X^{I} \rho^{-2n} \right) (u^{-1} \cdot z_{0}^{-1} \cdot z) - q(u^{-1}) \right| \lesssim 
\delta^{2} \rho(z_{0}^{-1} \cdot z)^{-2n-2-d(I)}.
\]
This estimate, (\ref{Xb}), and the conditions $a_1)$ and $a_2)$ of the atom $a(\cdot)$ lead to
\begin{equation*}
\begin{split}
\left| (X^{I}b)(z) \right| & \lesssim \delta^{2} \rho(z_{0}^{-1} \cdot z)^{-2n-2-d(I)} \| a \|_{L^{1}(\mathbb{H}^n)} \\
& \lesssim \delta^{2} \rho(z_{0}^{-1} \cdot z)^{-2n-2-d(I)} |B|^{1-\frac{1}{p_0}} \| a \|_{L^{p_0}(\mathbb{H}^n)} \\
& \lesssim \delta^{2} \rho(z_{0}^{-1} \cdot z)^{-2n-2-d(I)} |B|^{1-\frac{1}{p}} \\
& \lesssim \delta^{2+Q} |B|^{-\frac{1}{p}} \rho(z_{0}^{-1} \cdot z)^{-Q-d(I)},
\end{split}
\end{equation*}
for $\rho(z_0^{-1} \cdot z) \geq 2 \beta^{2}\delta$. This concludes the proof.
\end{proof}

The following result is crucial to get Theorem \ref{principal thm}.

\begin{proposition} \label{pointwise estimate} Let $a(\cdot)$ be an $(p, p_{0}, N)$ - atom centered at the $\rho$ - ball 
$B=B(z_0, \delta)$. If $b(z) = (a \ast c_n \rho^{-2n})(z)$, then for all $z \in \mathbb{H}^{n}$
\begin{equation} \label{N estimate}
\begin{split}
N_{q, 2} \left(\widetilde{b} \, ; z \right) & \lesssim | B|^{-1/p} \left[(M \chi_{B})(z) \right]^{\frac{2 + Q/q}{Q}} + 
\chi_{4 \beta^2 B}(z) (M a)(z)  \\
& + \chi_{4 \beta^2 B}(z) \sum_{d(I)=2} (T^{*}_{I} a)(z),
\end{split}
\end{equation}
where $\widetilde{b}$ is the class of $b$ in $E^{q}_{1}$, $M$ is the Hardy-Littlewood maximal operator and $(T^{*}_{I} a) (z) = \sup_{\epsilon >0} \left|\int_{\rho(w^{-1} \cdot z) > \epsilon} \, (X^{I} \rho^{-2n})(w^{-1} \cdot z) a(w) \, dw \right|$.
\end{proposition}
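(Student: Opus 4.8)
The plan is to bound $N_{q,2}(\widetilde b;z)$ by producing, for each $z\in\mathbb H^n$, a single representative of the class $\widetilde b\in E^q_1$ and estimating its maximal function $\eta_{q,2}(\,\cdot\,;z)$. Since the kernels $X^I\rho^{-2n}$ with $d(I)\le 1$ are homogeneous of degree strictly larger than $-Q$, hence locally integrable, the potential $b=a\ast c_n\rho^{-2n}$ belongs to $C^1(\mathbb H^n)$ and is of class $C^\infty$ off $\overline B$. For $z\in\mathbb H^n$ let $P_z$ be the Taylor polynomial of $b$ at $z$ of homogeneous degree $\le 1$; then $P_z\in\mathcal P_1$ (translating a polynomial of homogeneous degree $\le 1$ by a group element yields again such a polynomial), so $g_z:=b-P_z$ represents $\widetilde b$ and $N_{q,2}(\widetilde b;z)\le\eta_{q,2}(g_z;z)=\sup_{r>0}r^{-2}\,|b-P_z|_{q,B(z,r)}$. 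I shall estimate this supremum separately in the regions $z\notin 4\beta^2B$ and $z\in 4\beta^2B$.

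Assume first $z\notin 4\beta^2B$ and write $R:=\rho(z_0^{-1}\cdot z)\ge 4\beta^2\delta$ (so that, as an easy computation shows, $M\chi_B(z)\sim(\delta/R)^Q$). If $r\le R/(2\beta^2)$, every point $z\cdot v$ with $\rho(v)\le\beta^2 r$ satisfies $\rho(z_0^{-1}\cdot z\cdot v)\ge R/2\ge 2\beta^2\delta$, so the Taylor inequality \cite[Corollary 1.44]{Foll-St} applied to $b-P_z$, combined with Lemma \ref{a conv ro} applied to the derivatives $X^Jb$ with $d(J)=2$, gives $|b(z\cdot u)-P_z(u)|\lesssim r^2\,\delta^{2+Q}\,|B|^{-1/p}\,R^{-Q-2}$ for $\rho(u)<r$, hence $r^{-2}|b-P_z|_{q,B(z,r)}\lesssim\delta^{2+Q}|B|^{-1/p}R^{-Q-2}$. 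If $r>R/(2\beta^2)$, I bound $|b-P_z|_{q,B(z,r)}\le|b|_{q,B(z,r)}+\|P_z\|_{L^\infty(B(z,r))}$: the uniform estimate $\int_{B(z_0,\tau)}|b|^q\lesssim\delta^{\,2q+Q-Qq/p}$ (valid for all $\tau>0$, obtained by splitting $B(z_0,\tau)$ along $\rho(z_0^{-1}\cdot)=2\beta^2\delta$ and using Minkowski's integral inequality together with the local $L^q$-integrability of $\rho^{-2n}$ on the inner part and Lemma \ref{a conv ro} on the outer part) yields $r^{-2}|b|_{q,B(z,r)}\lesssim r^{-2-Q/q}\delta^{2+Q/q-Q/p}$, while Lemma \ref{a conv ro} applied to $b(z)$ and $X_ib(z)$ gives $r^{-2}\|P_z\|_{L^\infty(B(z,r))}\lesssim r^{-1}\delta^{2+Q}|B|^{-1/p}R^{-Q-1}$. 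Since $q>1$ and $\delta\le R$, in every subcase the bound is at most a constant times $|B|^{-1/p}(\delta/R)^{2+Q/q}\sim|B|^{-1/p}[M\chi_B(z)]^{(2+Q/q)/Q}$, and taking the supremum over $r$ settles this region.

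Now assume $z\in 4\beta^2B$. Fix a sufficiently large constant $K=K(\beta)$, and for each $r>0$ split $a=a\chi_{B(z,Kr)}+a\chi_{B(z,Kr)^{c}}=:a^{(1)}+a^{(2)}$, with potentials $b^{(1)},b^{(2)}$ and $P_z=P_z^{(1)}+P_z^{(2)}$ the corresponding Taylor polynomials of homogeneous degree $\le 1$ ($b^{(1)}$ being $C^1$, and $b^{(2)}$ being $C^\infty$ on $B(z,Kr)$). For the local part, Minkowski's integral inequality gives $\|b^{(1)}\|_{L^q(B(z,r))}\le\int_{B(z,Kr)}\|\rho(v^{-1}\cdot)^{-2n}\|_{L^q(B(z,r))}\,|a(v)|\,dv\lesssim r^{\,Q/q-2n}\int_{B(z,Kr)}|a|$, and, since $z\in B(z,Kr)$, $\int_{B(z,Kr)}|a|\le|B(z,Kr)|\,(Ma)(z)$; dividing by $|B(z,r)|^{1/q}$ and using $Q-2n=2$ gives $r^{-2}|b^{(1)}|_{q,B(z,r)}\lesssim(Ma)(z)$. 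Elementary dyadic estimates give $|b^{(1)}(z)|\lesssim r^2(Ma)(z)$ and $|X_ib^{(1)}(z)|\lesssim r\,(Ma)(z)$, hence $r^{-2}\|P_z^{(1)}\|_{L^\infty(B(z,r))}\lesssim(Ma)(z)$. For the remaining part, \cite[Corollary 1.44]{Foll-St} applied to $b^{(2)}-P_z^{(2)}$ gives $r^{-2}|b^{(2)}-P_z^{(2)}|_{q,B(z,r)}\lesssim\sup_{\rho(v)\le\beta^2 r,\, d(J)=2}|X^Jb^{(2)}(z\cdot v)|$; writing $X^Jb^{(2)}(z\cdot v)=c_n\int_{\rho(v'^{-1}\cdot z)\ge Kr}(X^J\rho^{-2n})(v'^{-1}\cdot z\cdot v)\,a(v')\,dv'$, comparing it with $c_n\int_{\rho(v'^{-1}\cdot z)\ge Kr}(X^J\rho^{-2n})(v'^{-1}\cdot z)\,a(v')\,dv'$ (whose absolute value is $\le(T^*_Ja)(z)$), and estimating the difference of the two kernels by the mean value inequality on $\mathbb H^n$ and the homogeneity of degree $-Q-1$ of the first left derivatives of $X^J\rho^{-2n}$ (cf. Lemma \ref{ro estimate}), one obtains $|X^Jb^{(2)}(z\cdot v)|\lesssim(T^*_Ja)(z)+(Ma)(z)$. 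Adding the three contributions and taking the supremum over $r>0$ gives $N_{q,2}(\widetilde b;z)\lesssim(Ma)(z)+\sum_{d(J)=2}(T^*_Ja)(z)$ on $4\beta^2B$, which together with the preceding paragraph proves \eqref{N estimate}.

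The step I expect to be most delicate is the control of the local potential $b^{(1)}$ near $B$: one must avoid an $L^q$-type maximal function of $a$ and close the estimate with the plain Hardy-Littlewood maximal function $(Ma)(z)$. This is made possible by Minkowski's integral inequality together with the fact that $\rho^{-2n}\in L^q_{loc}(\mathbb H^n)$, i.e. $2nq<Q=2n+2$, which is precisely the hypothesis $q<\frac{n+1}{n}$ (the same local integrability also makes the inner part of the estimate for $\int_{B(z_0,\tau)}|b|^q$ work). A secondary point requiring care is the bookkeeping of the cut-off radius $Kr$, so that the comparison of $X^Jb^{(2)}(z\cdot v)$ with a genuine truncation $\int_{\rho(v'^{-1}\cdot z)>\epsilon}(X^J\rho^{-2n})(v'^{-1}\cdot z)\,a(v')\,dv'$ indeed falls within the supremum defining $(T^*_Ja)(z)$.
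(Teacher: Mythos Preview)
Your argument is correct and follows the same overall plan as the paper: take the degree-$\le 1$ Taylor polynomial $P_z$ of $b$ at $z$ as the representative-adjustment, and estimate $r^{-2}|b-P_z|_{q,B(z,r)}$ separately in the far region $z\notin 4\beta^2 B$ and the near region $z\in 4\beta^2 B$, with the small-$r$ subcase in the far region handled by the Taylor inequality together with Lemma~\ref{a conv ro}.

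The organization differs in two respects. In the far region at large radii you invoke a single global bound $\|b\|_{L^q(\mathbb{H}^n)}\lesssim\delta^{2+Q/q-Q/p}$ (plus the Lemma~\ref{a conv ro} bound on $P_z$), whereas the paper instead subdivides $B(z,r)$ into three pieces $D_1,D_2,D_3$ according to the position of $z\cdot w$ relative to $z_0$ and applies separate pointwise estimates on each. In the near region you split the atom at a fixed radius, $a=a\chi_{B(z,Kr)}+a\chi_{B(z,Kr)^c}$, and treat the local potential via Minkowski's integral inequality and $\rho^{-2n}\in L^q_{loc}$; the paper instead splits the \emph{integrand} defining $R(z,w)$ at the $w$-dependent radius $\rho(u^{-1}\cdot z)=2\beta^2\rho(w)$ and argues pointwise in $w$. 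Your version is a bit cleaner and makes transparent exactly where the hypothesis $q<\tfrac{n+1}{n}$ enters; the paper's version of this proposition does not explicitly use that restriction on $q$ at this stage (it chooses $p_0>Q/2$ in the one place a local integrability argument is needed). Both routes lead to the same three terms on the right-hand side of \eqref{N estimate}.
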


\begin{proof} For an atom $a(\cdot)$ satisfying the hypothesis of Proposition, we set
\[
R(z, w)= b(z \cdot w) - \sum_{0 \leq d(I) \leq 1} (X^{I}b)(z) w^{I}
\]
\[
=b(z \cdot w) - \sum_{0 \leq d(I) \leq 1} \left[\int_{B(z_0, \delta)} \, (X^{I} c_n\rho^{-2n})(u^{-1} \cdot z) a(u) \ du \right] 
w^I,
\]
where $w \to \sum (X^{I}b)(z) w^{I}$ is the left Taylor polynomial of the function $w \to b(z \cdot w)$ at $w=e$ of homogeneous degree $1$ 
(see \cite{Bonfi}, p. 272). We observe that if $I=(i_1, ..., i_{2n}, i_{2n+1})$ is a multi-index such that 
$d(I) \leq 1$, then $i_{2n+1} = 0$.

\

Next, we shall estimate $|R(z,w)|$ considering the cases 
\[
\rho(z_0^{-1} \cdot z) \geq 4 \beta^2 \delta \,\,\,\,\,\,\, \text{and} \,\,\,\,\,\,\, \rho(z_0^{-1} \cdot z) < 4 \beta^2 \delta
\]
separately, and then we will obtain the estimate (\ref{N estimate}).

\

\textbf{Case:} $\rho(z_0^{-1} \cdot z) \geq 4 \beta^2 \delta$.

\

For $\rho(z_0^{-1} \cdot z) \geq 4 \beta^2 \delta$, $\rho(w) \leq \frac{1}{2 \beta^2} \rho(z_0^{-1} \cdot z)$ and 
$\rho(u) \leq \beta^2 \rho(w)$, a computation gives $\rho(z_0^{-1} \cdot z \cdot u) \geq 2 \beta^2 \delta$. Then, by the left-invariant Taylor inequality in \cite[Corollary 1.44]{Foll-St} and Lemma \ref{a conv ro}, we get
\begin{equation*}
\begin{split}
|R(z, w)| & \lesssim \rho(w)^2 \sup_{\rho(u) \leq \beta^{2}\rho(w), \, d(I)=2} \left|(X^{I} b)( z \cdot u) \right| \\
& \lesssim |B|^{-1/p} \left(\frac{\delta}{\rho(z_0^{-1} \cdot z)}\right)^{2+Q} \rho(w)^{2}.  \label{R1}
\end{split}
\end{equation*}

Now, let $\rho(w) \geq \frac{1}{2 \beta^2} \rho(z_0^{-1} \cdot z)$. We have
\[
|R(z, w)| \leq |b(z \cdot w)| + \sum_{0 \leq d(I) \leq 1} |(X^{I}b)(z)| |w^{I}|.
\]
Since $\rho(z_0^{-1} \cdot z) \geq 4 \beta^2 \delta$, by Lemma \ref{a conv ro} and observing that 
$\rho(w)/\rho(z_0^{-1} \cdot z) > \frac{1}{2 \beta^2}$, 
we have
\[
|(X^{I}b)(z)| |w^{I}| \lesssim |B|^{-1/p} \left( \frac{\delta}{\rho(z_0^{-1} \cdot z)} \right)^{2+Q} \rho(w)^{2}.
\]
As for the other term, $|b(z \cdot w)|$, we consider separately the cases 
\[
\rho(z_0^{-1} \cdot z \cdot w) > 2 \beta^2 \delta \,\,\,\,\, \text{and} \,\,\,\,\, \rho(z_0^{-1} \cdot z \cdot w) \leq 2 \beta^2 \delta. 
\]
In the case $\rho(z_0^{-1} \cdot z \cdot w) > 2 \beta^2 \delta$, 
we apply Lemma \ref{a conv ro} with $I = 0$, obtaining
\[
|b(z\cdot w)| \lesssim |B|^{-1/p} \delta^{2+Q} \rho(z_0^{-1} \cdot z \cdot w)^{-Q}.
\]
Then
\begin{equation} \label{R2}
|R(z,w)| \lesssim |B|^{-1/p} \delta^{2+Q} \rho(z_0^{-1} \cdot z \cdot w)^{-Q} + 
|B|^{-1/p} \left( \frac{\delta}{\rho(z_0^{-1} \cdot z)} \right)^{2+Q} \rho(w)^{2}
\end{equation}
holds if $\rho(z_0^{-1} \cdot z) > 4 \beta^2 \delta$, $\rho(w) \geq \frac{1}{2 \beta^2} \rho(z_0^{-1} \cdot z)$ and 
$\rho(z_0^{-1} \cdot z \cdot w) > 2 \beta^2 \delta$.

For $\rho(z_0^{-1} \cdot z \cdot w) \leq 2 \beta^2 \delta$, we have 
$B(z_0, \delta) \subset \{ u : \rho(u^{-1} \cdot z \cdot w) < (1+2 \beta^2) \delta \} =: \Omega_{\delta}$, so
\begin{equation*}
\begin{split}
|b(z\cdot w)| & = c_n \left| \int_{B(z_0, \delta)} \rho(u^{-1} \cdot z \cdot w)^{-2n} a(u) du \right| \\
& \lesssim \|a \|_{L^{p_0}} \left(\int_{B(z_0, \delta)} \rho(u^{-1} \cdot z \cdot w)^{-2np'_0} du \right)^{1/ p'_{0}} \\
& \lesssim \|a \|_{L^{p_0}} \left(\int_{\Omega_\delta} \rho(u^{-1} \cdot z \cdot w)^{-2np'_0} du \right)^{1/ p'_{0}}.
\end{split}
\end{equation*}
Since $a(\cdot)$ is an $(p, p_0, N)$ - atom, we can choose $p_0 > Q/2$, and get
\begin{equation*}
\begin{split}
|b(z \cdot w)| & \lesssim |B|^{-1/p} \delta^{Q/p_0} \left( \int_{0}^{(1+2\beta^2)\delta}  r^{-2n p'_0 + Q -1} dr \right)^{1/p'_0} \\
& \lesssim |B|^{-1/p} \delta^{Q/p_0} \delta^{-2n} \delta^{Q/p'_0} = |B|^{-1/p} \delta^2.
\end{split}
\end{equation*}

Since $\rho(z_0^{-1} \cdot z) \geq 4 \beta^2 \delta$ we can conclude that
\begin{equation} \label{R3}
|R(z, w)| \lesssim  |B|^{-1/p} \delta^2 + |B|^{-1/p} \left( \frac{\delta}{\rho(z_0^{-1} \cdot z)} \right)^{2+Q} \rho(w)^{2}, 
\end{equation}
for all $|\rho(w)| \geq \frac{1}{2 \beta^2} \rho(z_0^{-1} z)$ and $\rho(z_0^{-1} \cdot z \cdot w) \leq 2 \beta^2 \delta$.

Let us the estimate
\[
r^{-2} \left( |B(e, r)|^{-1} \int_{B(e, r)} |R(z, w)|^{q} dw \right)^{1/q}, \,\,\,\, r > 0.
\]
For them, we split the domain of integration into three subsets:

\qquad

$D_1 = \left\{ w \in B(e, r) : \rho(w) \leq \frac{1}{2 \beta^2} \rho(z_0^{-1} \cdot z) \right\}$,

\qquad

$D_2 = \left\{ w \in B(e, r) : \rho(w) \geq \frac{1}{2 \beta^2} \rho(z_0^{-1} \cdot z), \, 
\rho(z_0^{-1} \cdot z \cdot w) > 2 \beta^2 \delta \right\},$
\\\\
and

\qquad

$D_3 = \left\{w \in B(e, r) : \rho(w) \geq \frac{1}{2 \beta^2} \rho(z_0^{-1} \cdot z), \, 
\rho(z_0^{-1} \cdot z \cdot w) \leq 2 \beta^2 \delta \right\}$
\\\\
According to the estimates obtained for $|R(z,w)|$ above, we use on $D_1$ the estimate (\ref{R1}), on $D_2$ the estimate (\ref{R2}) and on $D_3$ the estimate (\ref{R3}) to get
\[
r^{-2} \left( |B(e, r)|^{-1} \int_{B(e, r)} |R(z, w)|^{q} dw \right)^{1/q} \lesssim |B |^{-1/p} 
\left(\frac{\delta}{\rho(z_0^{-1} \cdot z)} \right)^{2+Q/q}.
\]
Thus,
\begin{equation}
N_{q, 2}\left(\widetilde{b} \, ; z \right) \lesssim |B |^{-1/p}  M(\chi_{B})(z)^{\frac{2+Q/q}{Q}}, \label{Nq3}
\end{equation}
if $\rho(z_0^{-1} \cdot z) \geq 4 \beta^2 \delta$.

\qquad

\textbf{Case:} $\rho(z_0^{-1} \cdot z) < 4 \beta^2 \delta$.

\qquad

We have
\[
R(z, w) = c_n \int \left[ \rho^{-2n}(u^{-1} \cdot z \cdot w) - \sum_{0 \leq d(I) \leq 1} (X^I \rho^{-2n})(u^{-1} \cdot z) w^{I} \right] 
a(u) du
\]
\[
=\int_{\rho(u^{-1} \cdot z) < 2 \beta^2 \rho(w)} \, + \int_{\rho(u^{-1} \cdot z) \geq 2 \beta^2 \rho(w)} \, = J_1(z, w) + J_2(z, w).
\]
Assuming that $u \neq z \cdot w$ and $u \neq z$, we can write
\[
U = \rho^{-2n}(u^{-1} \cdot z \cdot w) - \rho^{-2n}(u^{-1} \cdot z) - \sum_{d(I) = 1} (X^I \rho^{-2n})(u^{-1} \cdot z) w^{I}.
\]
By Lemma \ref{ro estimate}, we get
\[
|U| \lesssim \rho(u^{-1} \cdot z \cdot w)^{-2n} + \rho(u^{-1} \cdot z)^{-2n} + \rho(w) \, \rho(u^{-1} \cdot z)^{-2n-1}
\]
Observing that $\rho(u^{-1} \cdot z) < 2 \beta^2 \rho(w)$ implies $\rho(u^{-1} \cdot z \cdot w) < 3 \beta^2 \rho(w)$, we obtain
\[
|J_1(z, w)| \leq \int_{\rho(u^{-1} \cdot z) < 2 \beta^2 \rho(w)} |U||a(u)| du
\]
\[
\lesssim  \int_{\rho(u^{-1} \cdot z \cdot w) < 3 \beta^2 \rho(w)} \rho(u^{-1} \cdot z \cdot w)^{-2n} |a(u)| du  
\]
\[
+
\int_{\rho(u^{-1} \cdot z) < 2 \beta^2 \rho(w)} \rho(u^{-1} \cdot z)^{-2n}  |a(u)| du
\]
\[
+ \rho(w) \int_{\rho(u^{-1} \cdot z) < 2 \beta^2 \rho(w)} \rho(u^{-1} \cdot z)^{-2n-1} |a(u)| du
\]
\[
= \sum_{k=0}^{\infty}\int_{3^{-k}\beta^2\rho(w) \leq \rho(u^{-1} \cdot z \cdot w) <3^{-(k-1)}\beta^2 \rho(w)} 
\rho(u^{-1} \cdot z \cdot w)^{-2n} |a(u)| du
\]
\[
+
\sum_{k=0}^{\infty} \int_{2^{-k}\beta^2 \rho(w) \leq \rho(u^{-1} \cdot z) < 2^{-(k-1)}\beta^2 \rho(w)} \rho(u^{-1} \cdot z)^{-2n}  |a(u)| du
\]
\[
+ \rho(w) \sum_{k=0}^{\infty} \int_{2^{-k} \beta^2 \rho(w) \leq \rho(u^{-1} \cdot z) < 2^{-(k-1)} \beta^2 \rho(w)} 
\rho(u^{-1} \cdot z)^{-2n-1} |a(u)| du
\]
\[
\lesssim \rho(w)^2 (Ma)(z).
\]
To estimate $J_2(z, w)$, we can write (see \cite{Bonfi}, p. 272, taking into account that $x^t J x = 0$ for all $x \in \mathbb{R}^{2n}$)
\[
U= \left[\rho^{-2n}(u^{-1} \cdot z \cdot w) - \sum_{d(I) \leq 2} (X^I \rho^{-2n})(u^{-1} \cdot z) \frac{w^{I}}{|I|!} \right] + 
\sum_{d(I) = 2} (X^I \rho^{-2n})(u^{-1} \cdot z) \frac{w^{I}}{|I|!}
\]
\[
=U_1+U_2.
\]
For $\rho(u^{-1} \cdot z) \geq 2 \beta^2 \rho(w)$ and $\rho(\nu) \leq \beta^2 \rho(w)$, we have  
$\rho(u^{-1} \cdot z \cdot \nu) \geq \rho(u^{-1} \cdot z)/2$. Then, by the left-invariant Taylor inequality in \cite[Corollary 1.44]{Foll-St} and Lemma \ref{ro estimate}, we get
\begin{equation*}
\begin{split}
|U_1| & \lesssim \rho(w)^3 \sup_{\rho(\nu) \leq \beta^{2}\rho(w), \, d(I)=3} \left|(X^{I} \rho^{-2n})( u^{-1} \cdot z \cdot \nu) \right| \\
& \lesssim \rho(w)^3 \rho(u^{-1} \cdot z)^{-2n-3}.
\end{split}
\end{equation*}
Therefore,
\begin{equation*}
\begin{split}
|J_2(z, w)| & \lesssim \rho(w)^{3} \int_{\rho(u^{-1} \cdot z) \geq 2 \beta^2 \rho(w)} \rho(u^{-1} \cdot z)^{-2n-3} |a(u)| du \\
& + \left| \int_{\rho(u^{-1} \cdot z) \geq 2 \beta^2 \rho(w)} U_2 \, a(u) du \right| \\
& \lesssim \rho(w)^{2} \left( (Ma)(z) + \sum_{d(I)=2} (T_{I}^{\ast}a)(z) \right),
\end{split}
\end{equation*}
where $(T_{I}^{\ast}a)(z) = \sup_{\epsilon >0} \left|\int_{\rho(u^{-1} \cdot z)> \epsilon} \, (X^I \rho^{-2n})(u^{-1} \cdot z) a(u) \, du\right|$.

Now, it is easy to check that 
\[
r^{-2} \left( |B(e, r)|^{-1} \int_{B(e, r)} |J_1(z, w)|^{q} dw \right)^{1/q} \lesssim (Ma)(z)
\]
and
\[
r^{-2} \left( |B(e, r)|^{-1} \int_{B(e, r)} |J_2(z,w)|^{q} dw \right)^{1/q} \lesssim (Ma)(z) + \sum_{d(I)=2} (T^{*}_{I}a)(z).
\]
So
\[
r^{-2} \left( |B(e, r)|^{-1} \int_{B(e, r)} |R(z,w)|^{q} dw \right)^{1/q} \lesssim (Ma)(z) + \sum_{d(I)=2} (T^{*}_{I}a)(z).
\]
This estimate is global, in particular we have that
\begin{equation}
N_{q,2} \left(\widetilde{b} \, ; z \right) \lesssim (Ma)(z) + \sum_{d(I)=2} (T^{*}_{I}a)(z), \label{Nq4}
\end{equation}
for $\rho(z_0^{-1} \cdot z) < 4 \beta^2 \delta$. Finally, the estimates (\ref{Nq3}) and (\ref{Nq4}) for 
$N_{q,2} \left(\widetilde{b} \, ; z \right)$ allow us to obtain (\ref{N estimate}).
\end{proof}

\section{Main results}

We are now in a position to prove our main results.

\begin{theorem} \label{principal thm} Let $Q=2n+2$, $1 < q < \frac{n+1}{n}$ and $Q \, (2 + \frac{Q}{q})^{-1} < p \leq 1$. Then 
the sub-Laplacian $\mathcal{L}$ on $\mathbb{H}^n$ is a bijective mapping from $\mathcal{H}^{p}_{q, 2}(\mathbb{H}^{n})$ onto 
$H^{p}(\mathbb{H}^{n})$. Moreover, there exist two positive constant $c_1$ and $c_2$ such that
\begin{equation} \label{doble ineq}
c_1 \|G \|_{\mathcal{H}^{p}_{q, 2}(\mathbb{H}^{n})} \leq \| \mathcal{L}G \|_{H^{p}(\mathbb{H}^{n})} \leq 
c_2 \|G \|_{\mathcal{H}^{p}_{q, 2}(\mathbb{H}^{n})}
\end{equation}
hold for all $G \in \mathcal{H}^{p}_{q, 2}(\mathbb{H}^{n})$.
\end{theorem}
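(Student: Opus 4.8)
The plan is to establish the statement in three parts: (a) injectivity of $\mathcal L$ on $\mathcal H^{p}_{q,2}(\mathbb H^{n})$, which is nothing but Theorem \ref{L 1-1}; (b) the right--hand inequality in \eqref{doble ineq}; and (c) surjectivity of $\mathcal L$ together with the left--hand inequality. For (b) I would fix $G\in\mathcal H^{p}_{q,2}(\mathbb H^{n})$; since $N_{q,2}(G;z)<\infty$ for a.e.\ $z$, Lemma \ref{puntual 1}$(i)$ gives for each such $z$ the representative $g_{z}\in G$ with $\eta_{q,2}(g_{z};z)=N_{q,2}(G;z)$, and $\mathcal L G=\mathcal L g_{z}$ in $\mathcal S'(\mathbb H^{n})$ by Remark \ref{LG definition}. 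Recalling from \cite{Foll-St} that $\|\mathcal M_{N}h\|_{L^{p}}$ is comparable to $\|h\|_{H^{p}}$ for every $N\ge N_{p}$, I would work with $N=\max\{N_{p},Q+3\}$; taking $w=z$ in the definition of $M_{\varphi}$ gives $\mathcal M_{N}(\mathcal L G)(z)\le\sup_{\varphi\in\mathcal F_{N}}(M_{\varphi}(\mathcal L g_{z}))(z)$, and Proposition \ref{Lg dist} (applicable since $N>Q+2$) yields $\mathcal M_{N}(\mathcal L G)(z)\lesssim\eta_{q,2}(g_{z};z)=N_{q,2}(G;z)$ a.e.; taking $L^{p}$--norms gives $\|\mathcal L G\|_{H^{p}}\lesssim\|G\|_{\mathcal H^{p}_{q,2}}$.

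For (c), given $f\in H^{p}(\mathbb H^{n})$ I would fix an atomic decomposition $f=\sum_{j}k_{j}a_{j}$ in $\mathcal S'(\mathbb H^{n})$ into $(p,p_{0},N)$--atoms, with $p_{0}>\max\{Q/2,q\}$ and $N\ge N_{p}$, such that $\sum_{j}k_{j}^{p}\lesssim\|f\|_{H^{p}}^{p}$, then set $b_{j}=a_{j}*c_{n}\rho^{-2n}$ and let $B_{j}\in E^{q}_{1}$ be its class, so $\mathcal L B_{j}=a_{j}$ by Theorem \ref{fund solution}. The central step is the uniform bound $\|B_{j}\|_{\mathcal H^{p}_{q,2}}=\|N_{q,2}(B_{j};\cdot)\|_{L^{p}}\le C$, extracted from Proposition \ref{pointwise estimate}: the first term of \eqref{N estimate} contributes $|B_{j}|^{-1/p}\|M\chi_{B_{j}}\|_{L^{p(2+Q/q)/Q}}^{(2+Q/q)/Q}$, and the hypothesis $p>Q\,(2+\tfrac{Q}{q})^{-1}$ makes the exponent $p(2+\tfrac{Q}{q})/Q$ strictly larger than $1$, so $L^{p(2+Q/q)/Q}$--boundedness of $M$ turns this into $\lesssim|B_{j}|^{-1/p}|B_{j}|^{1/p}=1$; for the remaining two terms, since $p\le1<p_{0}$, Hölder's inequality over $4\beta^{2}B_{j}$ together with the $L^{p_{0}}$--boundedness of $M$ and of the maximal truncations $T^{*}_{I}$ ($d(I)=2$) and the normalization $\|a_{j}\|_{L^{p_{0}}}\le|B_{j}|^{1/p_{0}-1/p}$ give contributions $\lesssim|B_{j}|^{1/p-1/p_{0}}\|a_{j}\|_{L^{p_{0}}}\lesssim1$.

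Since $\mathcal H^{p}_{q,2}(\mathbb H^{n})$ is complete (Proposition \ref{cerrado}) and, using $p\le1$, $\sum_{j}k_{j}^{p}\|B_{j}\|_{\mathcal H^{p}_{q,2}}^{p}\le C^{p}\sum_{j}k_{j}^{p}<\infty$, the series $G:=\sum_{j}k_{j}B_{j}$ converges in $\mathcal H^{p}_{q,2}$ with $\|G\|_{\mathcal H^{p}_{q,2}}^{p}\lesssim\sum_{j}k_{j}^{p}\lesssim\|f\|_{H^{p}}^{p}$; by part (b), $\mathcal L$ is continuous into $H^{p}\hookrightarrow\mathcal S'$, so $\mathcal L G=\sum_{j}k_{j}\mathcal L B_{j}=\sum_{j}k_{j}a_{j}=f$, which proves surjectivity. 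Finally, for an arbitrary $G\in\mathcal H^{p}_{q,2}$ I would put $f:=\mathcal L G\in H^{p}$ and let $G_{0}$ be the element just constructed with $\mathcal L G_{0}=f$ and $\|G_{0}\|_{\mathcal H^{p}_{q,2}}\lesssim\|\mathcal L G\|_{H^{p}}$; since $\mathcal L(G-G_{0})=0$, Theorem \ref{L 1-1} forces $G=G_{0}$, giving $c_{1}\|G\|_{\mathcal H^{p}_{q,2}}\le\|\mathcal L G\|_{H^{p}}$ and finishing the proof.

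I expect the main obstacle to be the uniform estimate $\|N_{q,2}(B_{j};\cdot)\|_{L^{p}}\le C$: besides the delicate bookkeeping of powers of $|B_{j}|$ (where the lower restriction on $p$ is forced by the first term of \eqref{N estimate}), one must know that the maximal truncated operators $T^{*}_{I}$ attached to the Heisenberg kernels $X^{I}\rho^{-2n}$ with $d(I)=2$ — smooth off $e$ and homogeneous of degree $-Q$ — are bounded on $L^{p_{0}}(\mathbb H^{n})$, which rests on verifying the cancellation property of these kernels, the genuinely Heisenberg-specific ingredient.
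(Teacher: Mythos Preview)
Your proposal is correct and follows the same three--part architecture as the paper (injectivity via Theorem~\ref{L 1-1}, continuity via Proposition~\ref{Lg dist}, surjectivity via atomic decomposition and the potentials $b_{j}=a_{j}\ast c_{n}\rho^{-2n}$ controlled by Proposition~\ref{pointwise estimate}), but with two worthwhile simplifications. For (b), you bound the grand maximal function $\mathcal M_{N}(\mathcal L G)$ directly by $N_{q,2}(G;\cdot)$ (taking $N>Q+2$ so that Proposition~\ref{Lg dist} applies uniformly over $\mathcal F_{N}$), whereas the paper invokes a commutative approximate identity together with \cite[Corollary~4.17]{Foll-St}; both routes are valid, yours is slightly more direct. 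For (c), you establish the \emph{uniform} atom bound $\|N_{q,2}(B_{j};\cdot)\|_{L^{p}}\le C$ and then appeal to the Riesz--Fischer property of $\mathcal H^{p}_{q,2}$ (Proposition~\ref{cerrado}) to sum the series; the paper instead first sums pointwise, $\sum_{j}k_{j}N_{q,2}(\widetilde b_{j};z)$, and controls the $L^{p}$--norm of this sum using the vector--valued Fefferman--Stein inequality of \cite{Grafakos} for term~I, followed by Lemma~\ref{series in Eqk}. Your route is more elementary --- only the scalar $L^{r}$--boundedness of $M$ with $r=p(2+Q/q)/Q>1$ is needed --- while the paper's vector--valued argument would transfer more readily to settings (e.g.\ weighted or $p>1$) where the $p$--subadditivity $\|\sum\|_{p}^{p}\le\sum\|\cdot\|_{p}^{p}$ is unavailable. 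Your closing remark correctly isolates the one Heisenberg--specific analytic input: the $L^{p_{0}}$--boundedness of the maximal truncations $T^{*}_{I}$ for the degree--$(-Q)$ homogeneous kernels $X^{I}\rho^{-2n}$, which the paper sources from \cite{Folland} and \cite{Elias}.
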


\begin{proof} The injectivity of the sublaplacion $\mathcal{L}$ in $\mathcal{H}^{p}_{q, 2}(\mathbb{H}^{n})$ was proved in 
Theorem \ref{L 1-1}. 

Let $G \in \mathcal{H}^{p}_{q, \, 2}(\mathbb{H}^{n})$, since $N_{q, 2}(G; z)$ is finite $\text{a.e.} \,\, z \in \mathbb{H}^{n}$, by $(i)$ in 
Lemma \ref{puntual 1} and Proposition \ref{g distrib} the unique representative $g$ of $G$ (which depends on $z$), satisfying 
$\eta_{q, 2}(g; z)=N_{q, 2}(G; z)$, is a function in $L^{q}_{loc}(\mathbb{H}^{n}) \cap \mathcal{S}'(\mathbb{H}^{n})$. In particular, for
a \textit{commutative approximate identity} \footnote{A commutative approximate identity is a function $\phi \in \mathcal{S}(\mathbb{H}^{n})$ such that $\int \phi(z) \, dz = 1$ and $\phi_s \ast \phi_t = \phi_t \ast \phi_s$ for all $s,t > 0$.} $\phi$, by Remark \ref{LG definition} and Proposition \ref{Lg dist} we get
\[
M_{\phi}(\mathcal{L}G)(z) \leq C_{\phi} \,\, N_{q, \, 2}(G; z).
\]
Then, this inequality and Corollary 4.17 in \cite{Foll-St} give $\mathcal{L}G \in H^{p}(\mathbb{H}^{n})$ and 
\begin{equation} \label{continuity}
\| \mathcal{L}G \|_{H^{p}(\mathbb{H}^{n})} \leq C \, \| G \|_{\mathcal{H}^{p}_{q, \, 2}(\mathbb{H}^{n})}.
\end{equation}
This proves the continuity of sub-Laplacian $\mathcal{L}$ from $\mathcal{H}^{p}_{q, \, 2}(\mathbb{H}^{n})$ into $H^{p}(\mathbb{H}^{n})$.

Now we shall see that the operator $\mathcal{L}$ is onto. Given $f \in H^{p}(\mathbb{H}^{n})$, there exist a sequence of nonnegative numbers 
$\{ k_j \}_{j=1}^{\infty}$ and a sequence of $\rho$ - balls $\{B_j \}_{j=1}^{\infty}$ and $(p, p_0, N)$ atoms $a_j$ supported on $B_j$, 
such that $f= \sum_{j=1}^{\infty} k_j a_j$ and
\begin{equation} \label{atomic ineq}
\sum_{j=1}^{\infty} k_{j}^{p} \lesssim \|f \|_{H^{p}(\mathbb{H}^{n})}^{p}.
\end{equation}
For each $j \in \mathbb{N}$ we put $b_j(z)= (a_j \ast c_n \rho^{-2n})(z) = \int_{\mathbb{H}^{n}} c_n \rho(w^{-1} \cdot z)^{-2n} a_j(w) dw$, from Proposition \ref{pointwise estimate} we have
\[
N_{q, 2} \left(\widetilde{b}_j; \, z \right) \lesssim | B_j|^{-1/p} \left[(M \chi_{B_j})(z) \right]^{\frac{2 + Q/q}{Q}} + 
\chi_{4 \beta^2 B_j}(z) (M a_j)(z) 
\]
\[
+ \chi_{4 \beta^2 B_j}(z) \sum_{d(I)=2} (T^{*}_{I} a_j)(z),
\]
so
\begin{equation*}
\begin{split}
\sum_{j=1}^{\infty} k_j N_{q, 2} \left(\widetilde{b}_j; \, z \right) & \lesssim \sum_{j=1}^{\infty} k_j | B_j|^{-1/p} 
\left[(M \chi_{B_j})(z) \right]^{\frac{2 + Q/q}{Q}} \\
& + \sum_{j=1}^{\infty} k_j \chi_{4 \beta^2 B_j}(z) (M a_j)(z) \\
& + \sum_{j=1}^{\infty} k_j \chi_{4 \beta^2 B_j}(z) \sum_{d(I)=2} (T^{*}_{I} a_j)(z) \\
& = I + II + III.
\end{split}
\end{equation*}
To study $I$, by hypothesis, we have that $0 < p \leq 1$ and $(2 + Q/q) p > Q$. Then
\begin{equation*}
\begin{split}
\|I\|_{L^{p}(\mathbb{H}^{n})} & = \left\| \sum_{j=1}^{\infty} k_j |B_j|^{-1/p} M(\chi_{B_j})(\cdot)^{\frac{2 + Q/q}{Q}} 
\right\|_{L^{p}(\mathbb{H}^{n})} \\
& = \left\| \left\{ \sum_{j=1}^{\infty} k_j |B_j|^{-1/p} M(\chi_{B_j})(\cdot)^{\frac{2 + Q/q}{Q}} \right\}^{\frac{Q}{2 + Q/q}} 
\right\|_{L^{\frac{2 + Q/q}{Q} p}(\mathbb{H}^{n})}^{\frac{2 + Q/q}{Q}} \\
& \lesssim \left\| \left\{ \sum_{j=1}^{\infty} k_j |B_j|^{-1/p} \chi_{B_j}(\cdot)  \right\}^{\frac{Q}{2 + Q/q}} 
\right\|_{L^{\frac{2 + Q/q}{Q} p}(\mathbb{H}^{n})}^{\frac{2 + Q/q}{Q}} \\
& = \left\| \sum_{j=1}^{\infty} k_j |B_j|^{-1/p} \chi_{B_j}(\cdot) \right\|_{L^{p}(\mathbb{H}^{n})} \\
& \lesssim \left( \sum_{j=1}^{\infty} k_{j}^{p} \right)^{1/p} \lesssim \|f \|_{H^{p}(\mathbb{H}^{n})},
\end{split}
\end{equation*}
where the first inequality follows from \cite[Theorem 1.2]{Grafakos}, the condition $0 < p \leq 1$ gives the second inequality, and 
(\ref{atomic ineq}) gives the last one.

To study $II$, since $p \leq 1$ we have that
\begin{equation*}
\begin{split}
\| II \|_{L^{p}(\mathbb{H}^n)}^p & \lesssim \left\| \sum_{j} k_j \, \chi_{4\beta^2B_{j}} \, (Ma_j)(\cdot) 
\right\|_{L^{p}(\mathbb{H}^n)}^p \\
& \lesssim \sum_{j} k_j^p \int \chi_{4\beta^2B_{j}}(z) \, (Ma_j)^p(z) \, dz, 
\end{split}
\end{equation*}
applying Holder's inequality with $\frac{p_0}{p}$, using that the maximal operator $M$ is bounded on $L^{p_0}(\mathbb{H}^n)$ and that every
$a_j(\cdot)$ is an $(p, p_0, N)$ - atom,  we get
\begin{equation*}
\begin{split}
\| II \|_{L^{p}(\mathbb{H}^n)}^p & \lesssim \sum_{j} k_j^p |B_j|^{1-\frac{p}{p_0}} 
\left( \int (Ma_j)^{p_0}(z) \, dz \right)^{\frac{p}{p_0}} \\
& \lesssim \sum_{j} k_j^p |B_j|^{1-\frac{p}{p_0}} \| a_j \|_{L^{p_0}(\mathbb{H}^n)}^p \\
& \lesssim \sum_{j} k_j^p |B_j|^{1-\frac{p}{p_0}} |B_j|^{\frac{p}{p_0} - 1} \\
& = \sum_{j} k_j^p \lesssim \|f \|_{H^{p}(\mathbb{H}^{n})}^{p},
\end{split}
\end{equation*}
where the last inequality follows from (\ref{atomic ineq})

To study $III$, by Theorem 3 in \cite{Folland} and Corollary 2, p. 36, in \cite{Elias} (see also \textbf{2.5}, p. 11, in \cite{Elias}), 
we have, for every multi-index $I$ with $d(I) = 2$, that the operator $T_{I}^{*}$ is bounded on $L^{p_0}(\mathbb{H}^n)$ for each 
$1 < p_0 < \infty$. Proceeding as in the estimate of $II$, we get
\[
\| III\|_{L^{p}(\mathbb{H}^n)} \lesssim \left( \sum_{j=1}^{\infty} k_{j}^{p} \right)^{1/p} \lesssim \|f \|_{H^{p}(\mathbb{H}^{n})}.
\]
Thus,
\[
\left\| \sum_{j=1}^{\infty} k_j N_{q, 2} \left( \widetilde{b}_j; \, \cdot \right) \right\|_{L^{p}(\mathbb{H}^n)} \lesssim 
\|f \|_{{H^{p}(\mathbb{H}^n)}}.
\]
Then,
\begin{equation}
\sum_{j=1}^{\infty} k_j N_{q, 2} \left( \widetilde{b}_j; \, z \right) < \infty \,\,\,\,\,\, \text{a.e.} \, z \in \mathbb{H}^{n} \label{Nq}
\end{equation}
and
\begin{equation}
\left\| \sum_{j=M+1}^{\infty} k_j N_{q, 2} \left(\widetilde{b}_j; \cdot \right) \right\|_{L^{p}(\mathbb{H}^n)} \rightarrow 0, \,\,\,\, 
\text{as} \,\,  M \rightarrow \infty  \label{Nq2}.
\end{equation}
From (\ref{Nq}) and Lemma \ref{series in Eqk}, there exists a function $G$ such that $\sum_{j=1}^{\infty} k_j \widetilde{b}_j = G$ 
in $E^{q}_{1}$ and
\[
N_{q, 2} \left( \left(G - \sum_{j=1}^{M} k_j \widetilde{b}_j \right) ; \, z \right) \leq 
C \, \sum_{j=M+1}^{\infty} k_j N_{q, 2}(\widetilde{b}_j; z).
\]
This estimate together with (\ref{Nq2}) implies
\[
\left\| G - \sum_{j=1}^{M} k_j \widetilde{b}_j \right\|_{\mathcal{H}^{p}_{q,2}(\mathbb{H}^n)} \rightarrow 0, \,\,\,\, 
\text{as} \,\,  M \rightarrow \infty.
\]
By proposition \ref{cerrado}, we have that $G \in \mathcal{H}^{p}_{q,2}(\mathbb{H}^{n})$ and $G = \sum_{j=1}^{\infty} k_j \widetilde{b}_j$ in 
$\mathcal{H}^{p}_{q,2}(\mathbb{H}^{n})$. Since $\mathcal{L}$ is a continuous operator from $\mathcal{H}^{p}_{q,2}(\mathbb{H}^{n})$ into 
$H^{p}(\mathbb{H}^{n})$, we get
\[
\mathcal{L}G = \sum_j k_j \mathcal{L} \widetilde{b}_j = \sum_j k_j a_j = f,
\]
in $H^{p}(\mathbb{H}^{n})$. This shows that $\mathcal{L}$ is onto $H^{p}(\mathbb{H}^{n})$. Moreover,
\begin{equation} \label{norm equiv}
\| G\|_{\mathcal{H}^{p}_{q,2}(\mathbb{H}^n)} = 
\left\| \sum_{j=1}^{\infty} k_j \widetilde{b}_j \right\|_{\mathcal{H}^{p}_{q,2}(\mathbb{H}^n)} \lesssim 
\left\| \sum_{j=1}^{\infty} k_j N_{q, 2}(\widetilde{b}_j; \, \cdot) \right\|_{L^{p}(\mathbb{H}^n)}
\end{equation}
\[
\lesssim \|f \|_{H^{p}(\mathbb{H}^n)} 
= \| \mathcal{L} G \|_{H^{p}(\mathbb{H}^n)}.
\]
Finally, (\ref{continuity}) and (\ref{norm equiv}) give (\ref{doble ineq}), and so the proof is concluded.
\end{proof}

Therefore, Theorem \ref{principal thm} allows us to conclude, for $Q(2+Q/q)^{-1} < p \leq 1$, that the equation
\[ 
\mathcal{L} F = f, \,\,\,\,\,\, f \in H^p(\mathbb{H}^n) 
\]
has a unique solution in $\mathcal{H}^{p}_{q, 2}(\mathbb{H}^{n})$, namely: $F := \mathcal{L}^{-1}f$.

\

We shall now see that the case $0 < p \leq Q \, (2 + \frac{Q}{q})^{-1}$ is trivial.

\begin{theorem} \label{2nd thm}
If \, $1 < q < \frac{n+1}{n}$ and $0 < p \leq Q \, (2 + \frac{Q}{q})^{-1}$, then $\mathcal{H}^{p}_{q, \, 2}(\mathbb{H}^{n}) = \{ 0 \}.$
\end{theorem}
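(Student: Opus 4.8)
The plan is to argue by contradiction: I assume $G \in \mathcal{H}^{p}_{q,2}(\mathbb{H}^{n})$ with $G \neq 0$ and show that then $N_{q,2}(G;\cdot) \notin L^{p}(\mathbb{H}^{n})$, contradicting the hypothesis $G \in \mathcal{H}^{p}_{q,2}(\mathbb{H}^{n})$. The first step is to record that if the seminorm $\|G\|_{q,B}$ vanished for \emph{every} $\rho$-ball $B$, then $G$ would be the zero class of $E^{q}_{1}$. Indeed, fixing a representative $g_{0}\in G$ and an exhausting sequence of balls $B_{k}=B(e,k)$, the condition $\|G\|_{q,B_{k}}=0$ forces $g_{0}$ to agree a.e.\ on $B_{k}$ with some $p_{k}\in\mathcal{P}_{1}$ (the infimum defining $\|G\|_{q,B_{k}}$ is attained because $\mathcal{P}_{1}$ is finite dimensional and restriction to a set of positive measure is injective on polynomials); compatibility on the nested balls then forces all $p_{k}$ to be one and the same $p\in\mathcal{P}_{1}$, so $g_{0}=p$ a.e.\ and $G=0$. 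Hence, since $G\neq 0$, there is a $\rho$-ball $B^{*}=B(z^{*},r^{*})$ with $m:=\|G\|_{q,B^{*}}>0$.

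The heart of the argument is a uniform lower bound for $N_{q,2}(G;z)$ at an arbitrary point $z\in\mathbb{H}^{n}$. Set $r=r^{*}+\rho((z^{*})^{-1}\cdot z)$; the triangle inequality for $\rho$ gives $B^{*}\subseteq B(z,r)$, so for every representative $g'\in G$,
\[
\eta_{q,2}(g';z)\ \geq\ r^{-2}\,|g'|_{q,B(z,r)}\ \geq\ r^{-2}\Bigl(\tfrac{|B^{*}|}{|B(z,r)|}\Bigr)^{1/q}|g'|_{q,B^{*}}\ =\ (r^{*})^{Q/q}\,r^{-(2+Q/q)}\,|g'|_{q,B^{*}}.
\]
Taking the infimum over $g'\in G$ and using $\inf_{g'\in G}|g'|_{q,B^{*}}=\|G\|_{q,B^{*}}=m$ yields
\[
N_{q,2}(G;z)\ \geq\ m\,(r^{*})^{Q/q}\,\bigl(r^{*}+\rho((z^{*})^{-1}\cdot z)\bigr)^{-(2+Q/q)}\qquad\text{for all }z\in\mathbb{H}^{n}.
\]

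It remains to integrate. Raising to the $p$-th power, integrating over $\mathbb{H}^{n}$, and substituting $u=(z^{*})^{-1}\cdot z$ (a measure preserving change of variables, cf.\ Remark~\ref{cambio de centro}) gives
\[
\|N_{q,2}(G;\cdot)\|_{L^{p}(\mathbb{H}^{n})}^{p}\ \geq\ m^{p}(r^{*})^{pQ/q}\int_{\mathbb{H}^{n}}\bigl(r^{*}+\rho(u)\bigr)^{-(2+Q/q)p}\,du .
\]
Splitting $\mathbb{H}^{n}\setminus B(e,r^{*})$ into the dyadic annuli $\{\,2^{k}r^{*}\leq\rho(u)<2^{k+1}r^{*}\,\}$ and using $|B(e,s)|=cs^{Q}$, the integral on the right is bounded below by a positive constant times $\sum_{k\geq 0}2^{k(Q-(2+Q/q)p)}$. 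The hypothesis $p\leq Q(2+\tfrac{Q}{q})^{-1}$ is precisely $(2+\tfrac{Q}{q})p\leq Q$, so this series diverges; hence $N_{q,2}(G;\cdot)\notin L^{p}(\mathbb{H}^{n})$, which is the desired contradiction, and therefore $\mathcal{H}^{p}_{q,2}(\mathbb{H}^{n})=\{0\}$.

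There is no serious analytic obstacle in this proof — it uses only the ball-volume formula $|B(e,s)|=cs^{Q}$, the triangle inequality for the Koranyi norm, and elementary measure theory. The one point that needs a little care is the reduction $G\neq 0\Rightarrow\|G\|_{q,B^{*}}>0$ for some ball, i.e.\ the fact that a locally $L^{q}$ function which agrees, on each ball of an exhaustion, with a polynomial of homogeneous degree at most $1$ must be such a polynomial globally; everything after that is a direct computation. (Incidentally, the restriction $1<q<\tfrac{n+1}{n}$ plays no role in this statement — the argument works for any $1<q<\infty$ — but it is kept so that the trivial and nontrivial ranges of $p$ partition the same family of spaces as in Theorem~\ref{principal thm}.)
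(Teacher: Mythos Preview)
Your proof is correct and follows essentially the same route as the paper's: both show that $G\neq 0$ forces $\|G\|_{q,B^{*}}>0$ for some ball, use the inclusion $B^{*}\subset B(z,r)$ to get $N_{q,2}(G;z)\gtrsim \rho((z^{*})^{-1}\cdot z)^{-(2+Q/q)}$ for large $z$, and then observe this lower bound is not in $L^{p}$ when $(2+Q/q)p\leq Q$. Your write-up is in fact a bit more careful than the paper's in justifying the first step (the paper simply asserts it is ``easy to check'').
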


\begin{proof} Let $G \in \mathcal{H}^{p}_{q, \, 2}(\mathbb{H}^{n})$ and assume $G \neq 0$. Then there exists $g \in G$ 
that is not a polynomial of homogeneous degree less or equal to $1$. It is easy to check that there exist a positive constant $c$ and 
a $\rho$ - ball $B = B(e, r)$ with $r > 1$ such that
\[
\int_{B} |g(w) - P(w)|^{q} \, dw \geq c > 0,
\]
for every $P \in \mathcal{P}_{1}$.

Let $z$ be a point such that $\rho(z) > r$ and let $\delta = 2 \rho(z)$. Then $B(e, r) \subset B(z, \delta)$. If $h \in G$, then 
$h = g - P$ for some $P \in \mathcal{P}_{1}$ and
\[
\delta^{-2}|h|_{q, B(z, \delta)} \geq c \rho(z)^{-2-Q/q}.
\]
So $N_{q,2}(G; \, z) \geq c \, \rho(z)^{-2-Q/q}$, for $\rho(z) > r$. Since $p \leq Q(2+Q/q)^{-1}$, we have that
\[
\int_{\mathbb{H}^n} [N_{q,2}(G; z)]^p dz \geq c \, \int_{\rho(z) > r} \rho(z)^{-(2+Q/q)p} \, dz = \infty,
\]
which gives a contradiction. Thus $\mathcal{H}^{p}_{q, 2}(\mathbb{H}^{n}) = \{0\}$, if $p \leq Q(2+Q/q)^{-1}$.
\end{proof}

\noindent\textbf{Acknowledgements}\\
\textit{My thanks go to the referee for the useful suggestions and comments which helped me to improve the original manuscript.}

 \bigskip

\noindent Pablo Rocha \\
pablo.rocha@uns.edu.ar \bigskip

\noindent {\small
\noindent Universidad Nacional del Sur \\
Departamento de Matem\'atica \\
Avenida Alem 1253. 2do Piso \\
8000 Bah\'ia Blanca, Argentina
}\bigskip

\end{document}